\newtheorem{theorem}{Theorem}[section]
\newtheorem{lemma}[theorem]{Lemma}
\theoremstyle{definition}
\newtheorem{remark}{Remark}
\def\R{\mathbb{R}}
\title[Modeling and optimal control of HIV/AIDS]{Modeling 
and optimal control of HIV/AIDS prevention through PrEP}
\author[C. J. Silva and D. F. M. Torres]{}
\subjclass{Primary: 34C60, 92D30; Secondary: 34D23, 49K15.}
\keywords{PrEP, HIV/AIDS model, global stability, optimal control, Cape Verde.}
\email{cjoaosilva@ua.pt}
\email{delfim@ua.pt}
\thanks{$^*$Corresponding author: Cristiana J. Silva (cjoaosilva@ua.pt)}
\begin{document}

\maketitle

\centerline{\scshape Cristiana J. Silva$^*$ and Delfim F. M. Torres}
\medskip
{\footnotesize
\centerline{Center for Research \& Development in Mathematics and Applications (CIDMA)}
\centerline{Department of Mathematics, University of Aveiro}
\centerline{3810--193 Aveiro, Portugal}}

\bigskip


\begin{abstract}
Pre-exposure prophylaxis (PrEP) consists in the use of an antiretroviral 
medication to prevent the acquisition of HIV infection by uninfected 
individuals and has recently demonstrated to be highly efficacious 
for HIV prevention.	We propose a new epidemiological 
model for HIV/AIDS transmission including PrEP.  
Existence, uniqueness and global stability of the 
disease free and endemic equilibriums are proved. 
The model with no PrEP is calibrated with the cumulative 
cases of infection by HIV and AIDS reported in Cape Verde 
from 1987 to 2014, showing that it predicts well such reality. 
An optimal control problem with a mixed state control 
constraint is then proposed and analyzed,
where the control function represents the PrEP strategy 
and the mixed constraint models the fact that, due to PrEP costs, 
epidemic context and program coverage, the number of individuals 
under PrEP is limited at each instant of time. The objective 
is to determine the PrEP strategy that satisfies the mixed state 
control constraint and minimizes the number of individuals 
with pre-AIDS HIV-infection as well as the costs associated 
with PrEP. The optimal control problem is studied analytically. 
Through numerical simulations, we demonstrate that PrEP 
reduces HIV transmission significantly.
\end{abstract}
	
	
\section{Introduction}

The human immunodeficiency virus (HIV) is a retrovirus that causes 
HIV infection and, over time, acquired immunodeficiency syndrome (AIDS) 
\cite{Weiss:1993:Science}. The most significant advance in medical 
management of HIV infection has been the treatment of patients 
with antiviral drugs, which can suppress HIV replication to undetectable levels.
Before 1996, few antiretroviral (ART) treatment options for HIV infection existed. 
The treatment of HIV infection was revolutionized in the mid 1990s 
with the introduction of drug regimens that combine inhibitors 
of the reverse transcriptase and protease and two of three essential enzymes of HIV. 
Combination of antiretroviral drugs, dramatically suppresses viral replication 
and reduces the HIV viral load to levels below the limits of detection 
of the most sensitive clinical assays, resulting in a significant 
reconstitution of the immune system \cite{Eric:HIV:2012}.
 
The \emph{Global AIDS Update 2016} of the Joint United Nations Programme 
on HIV/AIDS, reports that the global coverage of ART therapy reached 
approximately 46\% at the end of 2015. The gains in treatment 
are largely responsible for a 26\% decline in AIDS-related deaths 
globally since 2010, from an estimated 1.5 million in 2010, 
to 1.1 million in 2015. Despite this significant achievement, 
there has not been a decrease in new infections since 2010, 
with more than 2 million new infections reported in 2015 
\cite{UNAIDS:Global:AIDS:2016}. The World Health Organization's (WHO) 
Global Health Sector Strategy on HIV embraces innovation in the HIV response, 
recommending, for example, that people at substantial risk of HIV infection 
should be offered pre-exposure prophylaxis (PrEP) as an additional 
prevention choice, as part of comprehensive prevention. PrEP is the use 
of an antiretroviral medication to prevent the acquisition of HIV infection 
by uninfected individuals. Several trials among men who have sex with men, 
people who inject drugs, transgender people, women and serodiscordant 
couples (one partner is HIV-positive and the other is HIV-negative)
have shown that when PrEP is taken, it is an effective and safe 
mechanism for preventing HIV-infection \cite{WHO:PrEP}.

In \cite{Abbas:PrEP:2007}, it is concluded that PrEP could prevent 
2.7 to 3.2 million new cases of HIV in sub-Saharan Africa over 10 years, 
if it is targeted to the highest risk groups, and disinhibition 
could be prevented. In 2008, the authors of \cite{Gay:PrEP:2008} 
claimed that PrEP represents the most powerful available biologic intervention 
for HIV prevention. In 2016, WHO has welcomed a plan by the South African Ministry 
of Health to provide immediate antiretroviral treatment to all sex workers 
with HIV, and to offer daily oral PrEP to HIV-negative sex workers 
to prevent them from acquiring the infection \cite{url:WHO:SouthAfrica:PrEP}. 
Following WHO, making PrEP drugs available for safe, effective prevention 
outside the clinical trial setting is the current challenge. 

Substantial gaps remain in understanding the trade-offs between the costs 
and benefits of choosing alternative HIV prevention strategies, 
such as the initiation of PrEP by high risk uninfected individuals \cite{Drabo:PreP:CID:2016}. 
Mathematical models of HIV that include PrEP are scarce \cite{Celum:JInfecDis:HIV:Prep}.
In \cite{Kim:ModelPrEP:PlosOne:2014}, a mathematical model is used to estimate 
the effects of early diagnosis, early treatment and PrEP, on the HIV epidemic 
in South Korea over the next 40 years, as compared with the current situation.
In \cite{Drabo:PreP:CID:2016}, the authors develop a mathematical model 
to simulate HIV incidence among men residing in Los Angeles County, CA, 
aged between 15 to 65 years old, who have sex with men. They claim that PrEP 
and Test-and-Treat yield the largest reductions in HIV incidence, and are highly 
cost-effective. Another cost-effectiveness study was done in \cite{Alistar:PrEP:2014} 
and the results showed that PrEP can be cost-saving if delivered 
to individuals at increased risk of infection. In this paper, 
we propose a new mathematical epidemiological model for HIV/AIDS transmission 
including PrEP, which generalizes the HIV/AIDS sub-model recently proposed 
in \cite{SilvaTorres:TBHIV:2015}. 

First, we consider the mathematical model with no PrEP, calibrate 
the model to the cumulative cases of infection by HIV and AIDS 
from 1987 to 2014 reported in \cite{report:HIV:AIDS:capevert2015}, 
and we show that the model predicts well the reality given in \cite{report:HIV:AIDS:capevert2015}. 
In this model, the effective contact with people infected with HIV includes 
two modification parameters that account for the relative infectiousness 
of individuals with AIDS symptoms, in comparison to those
infected with HIV with no AIDS symptoms \cite{art:viral:load}, 
and for partial restoration of the immune function of individuals 
with HIV infection that use ART correctly \cite{AIDS:chronic:Lancet:2013}. 
It should be noted that the infectiousness 
of the HIV-infected individuals under ART treatment 
is a controversial subject. HIV treatment reduces the viral load in the blood, 
and also in other body fluids, such as semen and vaginal fluid. However, 
not all people living with HIV who take HIV treatment and have an undetectable 
viral load in the blood also have an undetectable viral load in their other bodily fluids: 
see, e.g., \cite{Loutfy:PlosONE:2013} and references therein. In our paper, 
the values considered for the modification parameters are based on two different research studies: 
the first one is known as \emph{HPTN 052}, where it was found that the risk 
of HIV transmission among heterosexual serodiscordant couples is 96\% lower 
when the HIV-positive partner is on treatment \cite{Cohen:NEJM:2011}; 
and the other one, where it was proved that HIV-infected individuals 
under ART treatment have a very low probability (assumed inferior to $0.04$)
of transmitting HIV \cite{DelRomero:2016}. We prove the global stability 
of the endemic equilibrium and we provide numerical simulations, 
illustrating the calibration of the model to the HIV/AIDS situation in Cape Verde.

In \cite{Monteiro:PrEP:CV:2015}, the authors evaluate the effect of early HIV 
treatment and optimization of care, HIV testing, condom distribution, 
and substance abuse treatment on HIV incidence from 2011 to 2021, 
using Cape Verde as an example. However, they did not include PrEP 
for HIV negative groups at risk as a possible prevention measure. 
Here, we show that inclusion of PrEP can reduce significantly the HIV incidence. 
We start by proving existence and global stability of the disease free equilibrium 
of the HIV/AIDS-PrEP model. Moreover, we also prove existence of an unique endemic 
equilibrium and the global stability for some specific relevant cases. 
It is important, however, to highlight that PrEP is not for everyone \cite{url:aids:PrEP}. 
Only people who are HIV-negative and at very high risk for HIV infection should take PrEP. 
Moreover, PrEP is highly expensive and it is still not approved in many countries, e.g., 
by the European Medicines Agency (EMEA) \cite{Review:PrEP:Infection:2016}. Therefore, 
the number of individuals that should take PrEP is limited at each instant of time. 
In order to study this health public problem, we formulate an optimal control problem 
with a mixed state control constraint. 

Optimal control theory has been successfully applied to several
epidemiological models, e.g., dengue \cite{MyID:283,MyID:306},
tuberculosis \cite{MyID:314,MyID:353}, Ebola \cite{MyID:335,MyID:340},
cholera \cite{MyID:356}, and HIV/AIDS \cite{MyID:355,MyID:359}.
However, we claim our work to be the first to apply optimal control 
to an HIV/AIDS model with PrEP. More precisely, we consider the HIV/AIDS-PrEP model 
and formulate an optimal control problem with the aim to determine the PrEP strategy 
that satisfies the mixed state control constraint and minimizes the number of individuals 
with pre-AIDS HIV-infection as well as the costs associated with PrEP. We solve the optimal 
control problem and provide numerical simulations, which show that it is possible 
to reduce the HIV incidence through PrEP and having into consideration the limitations 
related to the implementation of PrEP (cost, epidemic context, program coverage 
and individual-level adherence).

The paper is organized as follows. In Section~\ref{sec:SICA:model}, 
we consider the SICA model for HIV/AIDS transmission proposed 
in \cite{SilvaTorres:TBHIV:2015} and prove the global stability 
of the unique endemic equilibrium for the case when 
the associated AIDS-induced mortality is negligible. 
We calibrate the SICA model to the cumulative cases of infection 
by HIV and AIDS from 1987 to 2014 in Cape Verde and show 
that it predicts well this reality. In Section~\ref{sec:SICAE:model}, 
we generalize the SICA model by including the possibility of providing PrEP 
to susceptible individuals. We prove the existence and global stability 
of the disease-free equilibrium for $R_0 < 1$, where $R_0$ denotes 
the basic reproduction number, which is computed following \cite{van:den:Driessche:2002}. 
We prove existence of an unique endemic equilibrium point for $R_0 > 1$ 
and its global stability for a negligible AIDS-induced death rate 
and strict adherence to PrEP. Through numerical simulations, 
we investigate the impact of PrEP in the reduction of HIV transmission. 
In Section~\ref{sec:optimal:control}, we propose and analyze an optimal control 
problem with a mixed state control constraint. Numerical simulations show that 
the extremal solutions combine a reduction of HIV transmission with limited 
number of individuals under PrEP at each instant of time. 
We end with Section~\ref{sec:conclusion} of conclusions and future work.  


\section{The SICA model for HIV/AIDS transmission}
\label{sec:SICA:model}

In this section, we analyze a mathematical model 
for HIV/AIDS transmission with varying population size 
in a homogeneously mixing population, first proposed 
in \cite{SilvaTorres:TBHIV:2015}, and prove the global 
stability of the unique endemic equilibrium 
for the case when the associated AIDS-induced mortality is negligible. 
The model subdivides human population into four mutually-exclusive 
compartments: susceptible individuals ($S$); 
HIV-infected individuals with no clinical symptoms of AIDS 
(the virus is living or developing in the individuals 
but without producing symptoms or only mild ones) 
but able to transmit HIV to other individuals ($I$); 
HIV-infected individuals under ART treatment (the so called 
chronic stage) with a viral load remaining low ($C$); 
and HIV-infected individuals with AIDS clinical symptoms ($A$).
The total population at time $t$, denoted by $N(t)$, is given by
$N(t) = S(t) + I(t) + C(t) + A(t)$.
Effective contact with people infected with HIV is at a rate $\lambda$, given by
\begin{equation*}
\lambda = \frac{\beta}{N} \left( I + \eta_C \, C  + \eta_A  A \right),
\end{equation*}
where $\beta$ is the effective contact rate for HIV transmission.
The modification parameter $\eta_A \geq 1$ accounts for the relative
infectiousness of individuals with AIDS symptoms, in comparison to those
infected with HIV with no AIDS symptoms. Individuals with AIDS symptoms
are more infectious than HIV-infected individuals (pre-AIDS) because
they have a higher viral load and there is a positive correlation
between viral load and infectiousness \cite{art:viral:load}. 
On the other hand, $\eta_C \leq 1$ translates the partial restoration 
of immune function of individuals with HIV infection 
that use ART correctly \cite{AIDS:chronic:Lancet:2013}.
All individuals suffer from natural death, at a constant rate $\mu$. 
We assume that HIV-infected individuals 
with and without AIDS symptoms have access to ART treatment. 
HIV-infected individuals with no AIDS symptoms $I$ progress to the class 
of individuals with HIV infection under ART treatment $C$ at a rate $\phi$, 
and HIV-infected individuals with AIDS symptoms are treated for HIV at rate $\alpha$.
We also assume that an HIV-infected individual with AIDS symptoms $A$ 
that starts treatment moves to the class of HIV-infected individuals $I$, 
moving to the chronic class $C$ only if the treatment is maintained. 
HIV-infected individuals with no AIDS symptoms $I$ that do not take 
ART treatment progress to the AIDS class $A$ at rate $\rho$. Note that 
only HIV-infected individuals with AIDS symptoms $A$ 
suffer from an AIDS induced death, at a rate $d$. These assumptions 
are translated in the following mathematical model:
\begin{equation}
\label{eq:model:1}
\begin{cases}
\dot{S}(t) = \Lambda - \frac{\beta \left( I(t) + \eta_C \, C(t)  
+ \eta_A  A(t) \right)}{N(t)} S(t) - \mu S(t),\\[0.2 cm]
\dot{I}(t) =  \frac{\beta \left( I(t) + \eta_C \, C(t)  
+ \eta_A  A(t) \right)}{N(t)} S(t) - (\rho + \phi + \mu)I(t) 
+ \alpha A(t)  + \omega C(t), \\[0.2 cm]
\dot{C}(t) = \phi I(t) - (\omega + \mu)C(t),\\[0.2 cm]
\dot{A}(t) =  \rho \, I(t) - (\alpha + \mu + d) A(t).
\end{cases}
\end{equation}
We consider the biologically feasible region
\begin{equation}
\label{Omega:inv:region:HIV}
\Omega = \{ \left( S, I, C, A \right) \in \R_+^{4} \, : \, N \leq \Lambda/\mu \}.
\end{equation}
Using a standard comparison theorem (see \cite{Lakshmikantham:1989}), 
one can easily show that $N(t) \leq \frac{\Lambda}{\mu}$ 
if $N(0) \leq \frac{\Lambda}{\mu}$. Thus, the region 
$\Omega$ defined by \eqref{Omega:inv:region:HIV} is positively invariant. 
Hence, it is sufficient to consider the dynamics 
of the flow generated by \eqref{eq:model:1} in $\Omega$.
In this region, the model is epidemiologically and mathematically 
well posed in the sense of \cite{Hethcote:2000}. In other words,
every solution of model \eqref{eq:model:1} with initial conditions
in $\Omega$ remains in $\Omega$ for all $t > 0$. For this reason,
the dynamics of the model is considered in $\Omega$.

\begin{theorem}[See \cite{SilvaTorres:TBHIV:2015}]
\label{theo:persistence:model:1}
The population $N(t)$ is uniformly persistent, that is,
$$
\liminf_{t \to \infty} N(t) \geq \varepsilon 
$$ 
with $\varepsilon > 0$ not depending on the initial data.
\end{theorem}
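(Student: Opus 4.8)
The plan is to reduce the persistence statement to a one-dimensional differential inequality for the total population $N$ and then close it with a scalar comparison argument. First I would record the positivity and well-posedness of solutions: starting from nonnegative data in $\Omega$, each compartment stays nonnegative, since on the face $x_i=0$ the corresponding equation reduces to (nonnegative inflow) minus (a term proportional to $x_i$), so the vector field points inward. In particular the nonnegative orthant is invariant, whence $0 \le A(t) \le N(t)$ for all $t \ge 0$, and $N(t)>0$ along the trajectory.

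Next I would add the four equations of \eqref{eq:model:1}. The two incidence terms appearing in $\dot S$ and $\dot I$ are equal and opposite, so the sum telescopes to the clean balance law $\dot N(t) = \Lambda - \mu N(t) - d\,A(t)$, where the only loss beyond the natural mortality $\mu N$ is the AIDS-induced death $d\,A$. Using the bound $A \le N$ from the previous step, this loss satisfies $d\,A \le d\,N$, so $N$ obeys the autonomous differential inequality $\dot N \ge \Lambda - (\mu+d)N$, now completely decoupled from $S$, $I$, $C$.

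Finally I would invoke the comparison theorem already used in the excerpt (see \cite{Lakshmikantham:1989}) against the scalar linear equation $\dot y = \Lambda - (\mu+d)y$ with $y(0)=N(0)$, whose solution is $y(t) = \frac{\Lambda}{\mu+d} + \bigl(N(0)-\frac{\Lambda}{\mu+d}\bigr)e^{-(\mu+d)t}$. Comparison gives $N(t)\ge y(t)$, and since $y(t)\to \frac{\Lambda}{\mu+d}$ as $t\to\infty$ regardless of the initial value, passing to the lower limit yields $\liminf_{t\to\infty}N(t)\ge \frac{\Lambda}{\mu+d}$. The theorem then holds with $\varepsilon=\Lambda/(\mu+d)$, which is manifestly independent of the initial data.

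The only delicate point is the degeneracy of the incidence term $\beta(I+\eta_C C+\eta_A A)/N$ when $N=0$; this is precisely why the positivity step must come first, so that $N(t)>0$ keeps the field defined and the summed identity $\dot N = \Lambda-\mu N-d\,A$ legitimate. Everything after that is a routine scalar comparison, the crucial simplification being the inequality $A \le N$, which linearizes the loss term and lets the estimate ignore the three remaining compartments.
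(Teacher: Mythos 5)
Your argument is correct: summing the equations of \eqref{eq:model:1} gives $\dot N=\Lambda-\mu N-d\,A$, the bound $A\le N$ yields $\dot N\ge \Lambda-(\mu+d)N$, and the scalar comparison delivers $\liminf_{t\to\infty}N(t)\ge \Lambda/(\mu+d)$, a constant independent of the initial data. The paper itself omits the proof, deferring to \cite{SilvaTorres:TBHIV:2015}, and your reasoning is essentially the argument given there, so there is nothing to add.
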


Model \eqref{eq:model:1} has a disease-free equilibrium, given by
\begin{equation*}
\Sigma_0 = \left( S^0, I^0, C^0, A^0  \right) 
= \left(\frac{\Lambda}{\mu},0, 0,0  \right).
\end{equation*}
Following \cite{van:den:Driessche:2002}, the basic reproduction 
number $R_{0}$ for model \eqref{eq:model:1}, which represents 
the expected average number of new HIV infections produced 
by a single HIV-infected individual when in contact with 
a completely susceptible population, is given by
\begin{equation}
\label{eq:R0:model:1}
R_0 = \frac{ \beta\, \left(  \xi_2  \left( \xi_1 +\rho\, \eta_A \right) 
+ \eta_C \,\phi \, \xi_1 \right) }{\mu\, \left(  \xi_2  \left( \rho + \xi_1\right) 
+\phi\, \xi_1 +\rho\,d \right) +\rho\,\omega\,d} 
= \frac{\mathcal{N}}{\mathcal{D}},
\end{equation}
where $\xi_1 = \alpha + \mu + d$, 
$\xi_2 = \omega + \mu$ and $\xi_3 = \rho + \phi + \mu$.

\begin{lemma}[See \cite{SilvaTorres:TBHIV:2015}]
The disease free equilibrium $\Sigma_0$ is locally asymptotically stable
if $R_0 < 1$, and unstable if $R_0 > 1$.
\end{lemma}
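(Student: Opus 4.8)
The plan is to apply the next-generation matrix method of van den Driessche and Watmough \cite{van:den:Driessche:2002}, which is already the framework used to define $R_0$ in \eqref{eq:R0:model:1}, so that the stated local stability becomes a direct corollary of their general theorem.

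First I would linearize system \eqref{eq:model:1} at $\Sigma_0 = (\Lambda/\mu, 0, 0, 0)$ and record the Jacobian $J(\Sigma_0)$. Because $I = C = A = 0$ at the disease-free equilibrium, every first-order contribution of the force-of-infection term to the $(I, C, A)$ equations through $S$ vanishes; consequently $J(\Sigma_0)$ is block lower-triangular, with a $1 \times 1$ block equal to $-\mu$ coming from the $S$ equation and a $3 \times 3$ block $M$ governing the infected compartments $(I, C, A)$. The spectrum of $J(\Sigma_0)$ is therefore $\{-\mu\} \cup \sigma(M)$, and since $-\mu < 0$, local asymptotic stability of $\Sigma_0$ is equivalent to all eigenvalues of $M$ having negative real part.

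Next I would write $M = F - V$ in the usual way, splitting the new-infection terms from the transition terms. Here the only compartment receiving new infections is $I$, so
\begin{equation*}
F = \begin{pmatrix} \beta & \beta\eta_C & \beta\eta_A \\ 0 & 0 & 0 \\ 0 & 0 & 0 \end{pmatrix},
\qquad
V = \begin{pmatrix} \xi_3 & -\omega & -\alpha \\ -\phi & \xi_2 & 0 \\ -\rho & 0 & \xi_1 \end{pmatrix},
\end{equation*}
evaluated using $S/N = 1$ at $\Sigma_0$. One checks that $V$ is a nonsingular M-matrix (its off-diagonal entries are nonpositive and its column sums $\mu$, $\mu$, $\mu + d$ are positive, reflecting the through-flow losses $\mu$ and $d$), hence $V^{-1} \geq 0$, while $F \geq 0$; the rank-one structure of $F$ moreover gives $\rho(FV^{-1}) = (FV^{-1})_{11}$, which I would verify equals the expression $\mathcal{N}/\mathcal{D}$ in \eqref{eq:R0:model:1}.

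The conclusion then follows from the key equivalence in \cite{van:den:Driessche:2002}: for $F \geq 0$ and $V$ a nonsingular M-matrix, the spectral abscissa $s(F - V)$ is negative if and only if $\rho(FV^{-1}) < 1$, and positive if and only if $\rho(FV^{-1}) > 1$. Applied to $M = F - V$, this yields that all eigenvalues of $M$ have negative real part precisely when $R_0 < 1$ (local asymptotic stability), and that $M$ has an eigenvalue with positive real part when $R_0 > 1$ (instability). The main point requiring care is verifying the hypotheses (A1)--(A5) of \cite{van:den:Driessche:2002} — in particular the sign conditions on $\mathcal{F}$ and $\mathcal{V}$ and the M-matrix property of $V$ — after which the result is immediate; the remaining computation reducing $(FV^{-1})_{11}$ to \eqref{eq:R0:model:1} is routine linear algebra.
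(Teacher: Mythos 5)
Your proposal is correct and follows essentially the same route as the paper, which obtains this lemma directly from the next-generation matrix framework of \cite{van:den:Driessche:2002} (Theorem~2 there) after computing $R_0$ as in \eqref{eq:R0:model:1}; your matrices $F$ and $V$ are the right ones, $\det V=\mathcal{D}$ and $(FV^{-1})_{11}=\mathcal{N}/\mathcal{D}$ check out, and the verification of the hypotheses (A1)--(A5) is exactly what the citation delegates. The only cosmetic point is that with the ordering $(S,I,C,A)$ the Jacobian at $\Sigma_0$ is block \emph{upper}-triangular rather than lower-triangular, which of course does not affect the spectrum argument.
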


\begin{theorem}[See \cite{SilvaTorres:TBHIV:2015}]
The disease free equilibrium $\Sigma_0$ is globally 
asymptotically stable for $R_0 < 1$. 
\end{theorem}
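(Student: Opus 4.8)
The plan is to combine a differential inequality for the infected compartments with the next-generation matrix structure already used to define $R_0$. The key preliminary observation is that on $\Omega$ all variables are nonnegative and $N = S + I + C + A$, so $0 \le S/N \le 1$. Hence the force-of-infection term satisfies $\frac{\beta(I + \eta_C C + \eta_A A)}{N}\,S \le \beta(I + \eta_C C + \eta_A A)$, and the three equations for $I, C, A$ in \eqref{eq:model:1} yield the component-wise inequality $\dot{x} \le (F-V)\,x$, where $x = (I, C, A)^{T}$ and $F$, $V$ are exactly the new-infection and transition matrices arising in the computation of $R_0$ via \cite{van:den:Driessche:2002}, namely
\[
F = \begin{pmatrix} \beta & \beta\eta_C & \beta\eta_A \\ 0 & 0 & 0 \\ 0 & 0 & 0 \end{pmatrix},
\qquad
V = \begin{pmatrix} \xi_3 & -\omega & -\alpha \\ -\phi & \xi_2 & 0 \\ -\rho & 0 & \xi_1 \end{pmatrix}.
\]
(The $C$ and $A$ equations already hold with equality; only the $I$ equation is relaxed to an inequality.)

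First I would note that $F - V$ is a Metzler matrix, since its off-diagonal entries $\beta\eta_C + \omega$, $\beta\eta_A + \alpha$, $\phi$, $\rho$ are all nonnegative. Therefore the auxiliary linear system $\dot{y} = (F-V)\,y$ is cooperative, and the standard comparison theorem (in the form used in \cite{Lakshmikantham:1989}) gives $0 \le x(t) \le y(t)$ for all $t \ge 0$ whenever $0 \le x(0) \le y(0)$. The crucial ingredient is the equivalence, built into the $M$-matrix framework of \cite{van:den:Driessche:2002}, between $R_0 < 1$ (with $R_0$ equal to the spectral radius of $FV^{-1}$) and the requirement that every eigenvalue of $F - V$ have negative real part. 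Thus, when $R_0 < 1$, every solution of the comparison system obeys $y(t) \to 0$, and the squeeze $0 \le x(t) \le y(t)$ forces $(I(t), C(t), A(t)) \to (0,0,0)$ as $t \to \infty$.

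It then remains to recover the susceptible component. Summing \eqref{eq:model:1} gives $\dot{N} = \Lambda - \mu N - d A$; since $A(t) \to 0$, the limiting equation $\dot{N} = \Lambda - \mu N$ has the globally attracting equilibrium $N = \Lambda/\mu$, so by the theory of asymptotically autonomous systems (or a direct comparison estimate on the vanishing perturbation $d A(t)$) we obtain $N(t) \to \Lambda/\mu$. Because $S = N - I - C - A$ and the infected compartments vanish, it follows that $S(t) \to \Lambda/\mu$, so every trajectory starting in $\Omega$ converges to $\Sigma_0$. Combining this global attractivity with the local asymptotic stability of $\Sigma_0$ for $R_0 < 1$ established in the preceding lemma yields global asymptotic stability.

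I expect the main obstacle to be not the comparison step, which is routine once the linear system is known to be stable, but the justification of the two facts it rests on: (i) the equivalence $R_0 < 1 \Leftrightarrow$ stability of $F-V$, which must be quoted carefully from \cite{van:den:Driessche:2002} since it relies on $V$ being a nonsingular $M$-matrix (so $V^{-1} \ge 0$) and $F$ being nonnegative; and (ii) the passage from the limiting autonomous dynamics of $N$ to convergence of the full nonautonomous flow, where one should invoke asymptotic autonomy to avoid circularity. A cleaner alternative that handles (ii) automatically is a Lyapunov function: taking $L = w^{T} V^{-1} x$ with $w = (\beta, \beta\eta_C, \beta\eta_A)^{T} > 0$ the left Perron eigenvector of $V^{-1}F$, one computes $\dot{L} \le (R_0 - 1)\,w^{T} x \le 0$ on $\Omega$, and LaSalle's invariance principle confines all trajectories to the largest invariant subset of $\{w^{T}x = 0\} = \{I = C = A = 0\}$, which is precisely $\{\Sigma_0\}$.
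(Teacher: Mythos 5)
Your argument is correct, and its primary route is genuinely different from the one the paper relies on. Note first that the paper imports this theorem from \cite{SilvaTorres:TBHIV:2015} without reproving it; its own proof of the direct analogue for the PrEP model (Theorem~\ref{thm:gasS0}) uses a linear Lyapunov function $a_I I + a_C C + a_A A$ with explicitly expanded coefficients, whose derivative collapses to $\mathcal{D}\left(R_0\frac{S}{N}-1\right)\left(I+\eta_C C+\eta_A A\right)\le 0$, followed by LaSalle applied to the full system. Your main argument instead dominates the infected subsystem by the cooperative linear system $\dot y=(F-V)y$ and invokes the equivalence between $R_0<1$ and all eigenvalues of $F-V$ having negative real part, from \cite{van:den:Driessche:2002}; this is sound: your $F$ and $V$ are the correct next-generation matrices, $F-V$ is Metzler, and $V$ is a nonsingular $M$-matrix since $\det V=\xi_1\xi_2\xi_3-\omega\phi\xi_1-\alpha\rho\xi_2=\mathcal{D}>0$. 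What each approach buys: yours avoids computing $V^{-1}$ explicitly and generalizes mechanically to any model in the van den Driessche--Watmough framework, but it outsources the spectral fact and needs the extra asymptotic-autonomy (or perturbed-comparison) step to recover $N$ and $S$; the Lyapunov route is self-contained and yields convergence of all compartments at once from LaSalle on the compact positively invariant set $\Omega$. In fact, your ``cleaner alternative'' $L=w^{T}V^{-1}x$ is not merely analogous to the paper's function but identical to it up to the positive factor $\beta/\mathcal{D}$: since $F=e_1w^{T}$ with $e_1=(1,0,0)^{T}$, one has $R_0=w^{T}V^{-1}e_1$ and $\dot L=\left(R_0\frac{S}{N}-1\right)w^{T}x$, and the first entry of $w^{T}V^{-1}$ expands to $\beta\left(\xi_1\xi_2+\eta_C\phi\xi_1+\eta_A\rho\xi_2\right)/\mathcal{D}$, i.e.\ the paper's coefficient of $I$ rescaled. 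The only detail to watch, in your version and the paper's alike, is that identifying the largest invariant subset of $\{w^{T}x=0\}$ with the disease-free set uses $\eta_C>0$ (and $\beta>0$); in the degenerate case $\eta_C=0$ one must argue separately that $C\to 0$ via $\dot C=-\xi_2 C$ on that set.
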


To find conditions for the existence of an equilibrium for which HIV 
is endemic in the population (i.e., at least one of $I^*$, $C^*$ or 
$A^*$ is nonzero), denoted by $\Sigma_+ = \left(S^*, I^*, C^*, A^* \right)$, 
the equations in \eqref{eq:model:1} are solved in terms of the force 
of infection at the steady-state $\lambda^*$, given by
\begin{equation}
\label{eq:lambdaH:ast}
\lambda^* = \frac{\beta \left( I^* + \eta_C \, C^* + \eta_A A^*  \right)}{N^*}.
\end{equation}
Setting the right hand side of the equations of the model to zero, 
and noting that $\lambda = \lambda^*$ at equilibrium gives
\begin{equation}
\label{eq:end:equil:model:1}
S^* = \frac{\Lambda}{\lambda^* + \mu}\, , \quad 
I^* =-\frac{\lambda^* \Lambda \xi_1 \xi_2}{D} \, , \quad 
C^* =- \frac{\phi \lambda^* \Lambda \xi_1}{D} \, , \quad 
A^* = -\frac{\rho_1 \lambda^* \Lambda \xi_2}{D}
\end{equation}
with 
$D = -(\lambda^* + \mu)(\mu \left(\xi_2 (\rho + \xi_1) 
+ \xi_1 \phi + \rho d \right) + \rho \omega d)$, we use 
\eqref{eq:end:equil:model:1} in the expression for $\lambda^*$ 
in \eqref{eq:lambdaH:ast} to show that the nonzero (endemic) 
equilibrium of the model satisfies
\begin{equation*}
\lambda^* = -\mu (1-R_0).
\end{equation*}
The force of infection at the steady-state $\lambda^*$ is positive only if $R_0 > 1$. 
Thus, the existence and uniqueness of the endemic equilibrium follows. 

\begin{lemma}[See \cite{SilvaTorres:TBHIV:2015}]
The model \eqref{eq:model:1} has a unique endemic equilibrium whenever $R_0 > 1$.
\end{lemma}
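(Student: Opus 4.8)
The plan is to reduce the fixed-point problem for the endemic equilibrium to a single scalar equation in the steady-state force of infection $\lambda^*$, and then to count its admissible (positive) roots. The derivation immediately preceding the statement already carries this out; my proof simply organizes it and verifies the positivity that makes the equilibrium biologically meaningful.

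First I would set the right-hand sides of \eqref{eq:model:1} to zero. The last two equations decouple cleanly: the $\dot{C}$ equation gives $C^* = \frac{\phi}{\xi_2} I^*$ and the $\dot{A}$ equation gives $A^* = \frac{\rho}{\xi_1} I^*$, with $\xi_1 = \alpha + \mu + d$ and $\xi_2 = \omega + \mu$. The $\dot{S}$ equation, rewritten using $\lambda^* = \frac{\beta(I^* + \eta_C C^* + \eta_A A^*)}{N^*}$, yields $S^* = \frac{\Lambda}{\lambda^* + \mu}$. Feeding these relations into the $\dot{I}$ equation expresses $I^*$, and hence $C^*, A^*, S^*$, as the explicit functions of $\lambda^*$ displayed in \eqref{eq:end:equil:model:1}, with the common denominator $D = -(\lambda^*+\mu)\bigl(\mu(\xi_2(\rho+\xi_1)+\xi_1\phi+\rho d)+\rho\omega d\bigr)$.

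The decisive step is to substitute \eqref{eq:end:equil:model:1} back into the definition \eqref{eq:lambdaH:ast} of $\lambda^*$. A priori this consistency condition could be a high-degree polynomial in $\lambda^*$; the key observation, and the main algebraic obstacle, is that after clearing $D$ and $N^* = S^* + I^* + C^* + A^*$ the whole relation collapses to the \emph{linear} equation $\lambda^* = -\mu(1 - R_0) = \mu(R_0 - 1)$, with $R_0$ as in \eqref{eq:R0:model:1}. Being linear, this equation has exactly one root, and that root is strictly positive if and only if $R_0 > 1$; this single collapse is what simultaneously delivers existence and uniqueness.

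Finally I would confirm that the unique positive $\lambda^*$ produces a genuine equilibrium in $\Omega$. Since $\lambda^* = \mu(R_0-1) > 0$ forces $\lambda^* + \mu > 0$, the bracketed factor of $D$ is positive and hence $D < 0$; combined with the minus signs in \eqref{eq:end:equil:model:1} and the positivity of $\Lambda, \xi_1, \xi_2$, this makes $S^*, I^*, C^*, A^*$ all positive. Conversely, when $R_0 \leq 1$ the only solution has $\lambda^* \leq 0$, which is incompatible with a nonzero infected population, so no endemic equilibrium exists. This establishes both existence and uniqueness precisely for $R_0 > 1$.
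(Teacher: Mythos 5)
Your proposal follows essentially the same route as the paper: reduce the equilibrium equations to explicit expressions in the steady-state force of infection $\lambda^*$ as in \eqref{eq:end:equil:model:1}, substitute back into \eqref{eq:lambdaH:ast} to obtain the linear relation $\lambda^* = \mu(R_0 - 1)$, and read off existence and uniqueness from the sign of $R_0 - 1$. The added verification that $D<0$ and hence that all components $S^*, I^*, C^*, A^*$ are positive is a correct and welcome completion of the argument.
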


\begin{remark}
The expression of the endemic equilibrium of model \eqref{eq:model:1} is given by
\begin{equation*}
\begin{split}
S^* &= \frac{ \Lambda (\rho d \xi_2 
- \mathcal{D})}{\mu (\rho d \xi_2 - \mathcal{N})}\, , \quad \quad 
I^* = \frac{\Lambda \xi_1 \xi_2 (\mathcal{D} 
-\mathcal{N})}{\mathcal{D} (\rho d \xi_2 -\mathcal{N} )} \, , \\
C^* &= \frac{\Lambda \phi \xi_1 (\mathcal{D} -\mathcal{N})}{\mathcal{D} 
(\rho d \xi_2 -\mathcal{N} )} \, , \quad \quad
A^* = \frac{\Lambda \rho \xi_2 (\mathcal{D} 
-\mathcal{N})}{\mathcal{D} ( \rho d \xi_2 -\mathcal{N} )}.
\end{split}
\end{equation*}
\end{remark}

\begin{theorem}[See \cite{SilvaTorres:TBHIV:2015}]
The endemic equilibrium $\Sigma_+$ is locally asymptotically 
stable for $R_0$ near 1.
\end{theorem}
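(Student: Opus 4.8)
The plan is to establish this local stability by a center-manifold bifurcation analysis at the threshold $R_0 = 1$, using the reduction theorem of Castillo-Chavez and Song. Since the endemic equilibrium $\Sigma_+$ coalesces with the disease-free equilibrium $\Sigma_0$ as $R_0 \to 1$, and since the preceding lemma tells us that $\Sigma_0$ is locally asymptotically stable for $R_0 < 1$ and unstable for $R_0 > 1$, the Jacobian of \eqref{eq:model:1} at $\Sigma_0$ must possess a simple zero eigenvalue precisely when $R_0 = 1$. This is the signature of a transcritical bifurcation, and the sign of a single scalar coefficient will decide whether the bifurcating branch $\Sigma_+$ is stable.

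First I would choose the transmission rate $\beta$ as bifurcation parameter and solve $R_0 = 1$ in \eqref{eq:R0:model:1} for $\beta$, obtaining the critical value
\begin{equation*}
\beta^* = \frac{\mu\left(\xi_2(\rho + \xi_1) + \phi\,\xi_1 + \rho\,d\right) + \rho\,\omega\,d}{\xi_2\left(\xi_1 + \rho\,\eta_A\right) + \eta_C\,\phi\,\xi_1},
\end{equation*}
so that $R_0 = 1$ exactly at $\beta = \beta^*$. I would then verify that the Jacobian $J$ evaluated at $(\Sigma_0, \beta^*)$ has a simple zero eigenvalue with all remaining eigenvalues having negative real part; simplicity is guaranteed by the non-degeneracy of the threshold, $\partial R_0 / \partial \beta \neq 0$. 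Relabelling the state as $x = (x_1, x_2, x_3, x_4) = (S, I, C, A)$ and writing the model as $\dot{x} = f(x, \beta)$, I would compute a right null vector $w = (w_1, w_2, w_3, w_4)^{T}$ with $Jw = 0$ and a left null vector $v = (v_1, v_2, v_3, v_4)$ with $vJ = 0$, normalized so that $v\cdot w = 1$. Because the first column of $J$ at $\Sigma_0$ reduces to $(-\mu, 0, 0, 0)^{T}$, one finds immediately $v_1 = 0$, while $v_2, v_3, v_4 > 0$; dually, the first row of $J$ forces $w_1 < 0$ (susceptibles are depleted) with $w_2, w_3, w_4 > 0$.

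With these vectors in hand, the Castillo-Chavez--Song theorem expresses the stability of $\Sigma_+$ through the two coefficients
\begin{equation*}
a = \sum_{k,i,j=1}^{4} v_k\, w_i\, w_j\, \frac{\partial^2 f_k}{\partial x_i\, \partial x_j}(\Sigma_0, \beta^*),
\qquad
b = \sum_{k,i=1}^{4} v_k\, w_i\, \frac{\partial^2 f_k}{\partial x_i\, \partial \beta}(\Sigma_0, \beta^*).
\end{equation*}
The conclusion I am aiming for is $a < 0$ and $b > 0$, which by that theorem yields a forward (supercritical) bifurcation and hence local asymptotic stability of $\Sigma_+$ for $R_0$ slightly above $1$. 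Since $v_1 = 0$ and the $C$- and $A$-equations of \eqref{eq:model:1} are linear, only $k = 2$ contributes to both sums, so the whole computation collapses onto the second derivatives of the incidence term $\beta(I + \eta_C C + \eta_A A)S/N$ in the $I$-equation.

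The coefficient $b$ is routine: differentiating the incidence once in $\beta$ and once in an infected variable gives $v_2\,(w_2 + \eta_C w_3 + \eta_A w_4)$ at $\Sigma_0$ (where $S = N = \Lambda/\mu$), which is manifestly positive, so $b > 0$ is immediate. The genuine difficulty is the sign of $a$. Here the decisive terms arise from differentiating the incidence twice: because $N = S + I + C + A$ sits in the denominator, the mixed second derivatives carry factors of $1/N^2$ with negative sign, reflecting the dilution of the force of infection as the infected compartments grow, and the cross terms involving $S$ are weighted by the \emph{negative} component $w_1$. The main obstacle is therefore to show, by careful bookkeeping of the eigenvector signs against the structured second-derivative matrix of the incidence, that these negative dilution contributions dominate so that $a < 0$ overall; once this sign is secured, the claim follows directly from the Castillo-Chavez and Song reduction.
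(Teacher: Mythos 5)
Your overall strategy is the right one and is, as far as one can tell, the same route taken by the cited source \cite{SilvaTorres:TBHIV:2015}: the theorem is stated only ``for $R_0$ near $1$'' precisely because it is obtained from a center-manifold (Castillo-Chavez--Song) analysis of the transcritical bifurcation at $R_0=1$, with $\beta$ as bifurcation parameter. Your setup is sound: $\beta^*$ is correct, $v_1=0$ follows from the first column of $J(\Sigma_0)$, the linearity of the $C$- and $A$-equations reduces both sums to $k=2$, and $b=v_2\left(w_2+\eta_C w_3+\eta_A w_4\right)>0$ is immediate.

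The gap is that you stop exactly at the point that carries the entire content of the theorem: you state that the sign of $a$ ``is the main obstacle'' and must be ``secured,'' but you do not secure it. Without $a<0$ the theorem is not proved --- indeed $a>0$ would give a backward bifurcation and an \emph{unstable} small endemic branch, so this sign cannot be waved through. The computation does close, and more cleanly than your heuristic suggests. Writing $e=(0,1,\eta_C,\eta_A)$ for the infectivity weights, the only nonlinear term is $g=\beta\,(e\cdot x)\,S/N$, and at $\Sigma_0$ (where $e\cdot x=0$ and $S=N=\Lambda/\mu$) one gets
\begin{equation*}
\frac{\partial^2 g}{\partial x_i\,\partial x_j}\Big|_{\Sigma_0}
=\frac{\beta}{N}\left(e_i\,\delta_{jS}+e_j\,\delta_{iS}-e_i-e_j\right),
\end{equation*}
whence
\begin{equation*}
a=\frac{2\beta v_2}{N}\left(w_2+\eta_C w_3+\eta_A w_4\right)\left(w_1-\sum_{i=1}^{4}w_i\right)
=-\frac{2\beta v_2}{N}\left(w_2+\eta_C w_3+\eta_A w_4\right)\left(w_2+w_3+w_4\right)<0.
\end{equation*}
Note that the terms weighted by the negative component $w_1$ cancel identically between $e_i\delta_{jS}+e_j\delta_{iS}$ and $-e_i-e_j$, so the negativity of $a$ comes purely from the dilution terms acting on the infected components and does not depend on the sign of $w_1$ at all; your attribution of the decisive negativity to the ``cross terms involving $S$ weighted by $w_1<0$'' is therefore misleading, even though the conclusion $a<0$ is correct. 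With this identity supplied, your argument is complete.
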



\subsection{Global stability of the endemic equilibrium for negligible AIDS-induced death rate ($d=0$)}
\label{sec:glob:stab:SICA:d=0}

In this section, we investigate the global stability of the endemic equilibrium of model \eqref{eq:model:1} 
for the case when the associated AIDS-induced mortality is negligible ($d=0$). Adding the equations 
of the model \eqref{eq:model:1}, with $d=0$, gives $\dot{N} = \Lambda - \mu N$, 
so that $N \to \frac{\Lambda}{\mu}$ as $t \to \infty$. Thus, $\frac{\Lambda}{\mu}$ 
is an upper bound of $N(t)$ provided that $N(0) \leq \frac{\Lambda}{\mu}$. Further, 
if $N(0) > \frac{\Lambda}{\mu}$, then $N(t)$ decreases to this level. Using 
$N = \frac{\Lambda}{\mu}$ in the force of infection 
$\lambda = \frac{\beta}{N} \left( I + \eta_C \, C  + \eta_A  A \right)$ 
gives a limiting (mass action) system (see, e.g., \cite{FAgusto:bovineTB:2011}). 
Then, the force of infection becomes
$$
\lambda = \beta_1 \left( I + \eta_C \, C  + \eta_A  A \right) \, , 
\quad \text{where} \quad \beta_1 = \frac{\beta \mu}{\Lambda}.
$$
Therefore, we consider the model
\begin{equation}
\label{eq:model:2}
\begin{cases}
\dot{S}(t) = \Lambda - \beta_1 \left( I(t) + \eta_C \, C(t)  
+ \eta_A  A(t) \right) S(t) - \mu S(t),\\[0.2 cm]
\dot{I}(t) =  \beta_1 \left( I(t) + \eta_C \, C(t)  
+ \eta_A  A(t) \right) S(t) - \xi_3 I(t) + \alpha A(t) + \omega C(t), \\[0.2 cm]
\dot{C}(t) = \phi I(t) - \xi_2 C(t),\\[0.2 cm]
\dot{A}(t) =  \rho \, I(t) - \xi_1 A(t),
\end{cases}
\end{equation}
where $\xi_1 = \alpha + \mu$. The model \eqref{eq:model:2} has a unique endemic equilibrium 
given by $\tilde{\Sigma}_+ = \Sigma_+|_{d=0}$, whenever $\tilde{R}_0 = R_0|_{d=0} > 1$. 
Let us define
\begin{equation*}
\Omega_0 = \{ \left( S, I, C, A \right) \in \Omega \, : \, I=C=A=0 \}.
\end{equation*}

\begin{theorem}
The endemic equilibrium $\tilde{\Sigma}_+$ of model \eqref{eq:model:2} 
is globally asymptotically stable in $\Omega \backslash \Omega_0$ 
whenever $\tilde{R}_0 > 1$.  
\end{theorem}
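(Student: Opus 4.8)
My plan is to prove global stability by constructing a Goh--Volterra (logarithmic) Lyapunov function and invoking LaSalle's invariance principle. Since the analysis is carried out in the limiting system \eqref{eq:model:2}, whose force of infection is of mass-action type, I would look for a function of the form
\begin{equation*}
V = \left(S - S^* - S^* \ln\frac{S}{S^*}\right)
+ \left(I - I^* - I^* \ln\frac{I}{I^*}\right)
+ a_3 \left(C - C^* - C^* \ln\frac{C}{C^*}\right)
+ a_4 \left(A - A^* - A^* \ln\frac{A}{A^*}\right),
\end{equation*}
with positive constants $a_3, a_4$ to be determined and $(S^*, I^*, C^*, A^*) = \tilde{\Sigma}_+$. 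Each summand is nonnegative and vanishes only at the equilibrium, so $V$ is a legitimate Lyapunov candidate on $\Omega \backslash \Omega_0$.

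First I would record the equilibrium identities obtained by setting the right-hand sides of \eqref{eq:model:2} to zero: $\Lambda = \lambda^* S^* + \mu S^*$, $\lambda^* S^* = \xi_3 I^* - \alpha A^* - \omega C^*$, $C^* = (\phi/\xi_2) I^*$ and $A^* = (\rho/\xi_1) I^*$, where $\lambda^* = \beta_1(I^* + \eta_C C^* + \eta_A A^*)$ denotes the force of infection at $\tilde{\Sigma}_+$. Differentiating $V$ along trajectories and substituting these identities to eliminate $\Lambda$ and the aggregate transfer rate $\xi_3 I^*$, the goal is to rewrite $\dot V$ entirely in terms of the ratios $S/S^*$, $I/I^*$, $C/C^*$, $A/A^*$ and of the infection ratios such as $S C/(S^* C^*)$ and $S A/(S^* A^*)$.

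The crux is the choice of the weights $a_3, a_4$. The back-flow terms $\omega C$ and $\alpha A$ feeding the $I$-compartment, together with the contributions of $C$ and $A$ to the force of infection through $\eta_C$ and $\eta_A$, must be balanced against the loss terms $-\xi_2 C$ and $-\xi_1 A$ in the $\dot C$ and $\dot A$ equations. I would fix $a_3$ and $a_4$ so that, after substitution, every remaining non-logarithmic contribution assembles into groups of the form $n - \sum_{k=1}^{n} u_k$ in which the ratios $u_k > 0$ satisfy $\prod_{k} u_k = 1$; by the arithmetic--geometric mean inequality each such group is $\leq 0$, with equality iff all $u_k = 1$. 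The main obstacle I anticipate is precisely verifying that the weights can be chosen to force these product-equals-one cancellations simultaneously for the $C$- and $A$-couplings; this is where the cascade structure of the SICA dynamics and the relations $C^* = (\phi/\xi_2) I^*$, $A^* = (\rho/\xi_1) I^*$ must be exploited.

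Once $\dot V \leq 0$ on $\Omega \backslash \Omega_0$ is established, I would determine the largest invariant set contained in $\{\dot V = 0\}$. Equality in the AM--GM step forces all ratios to equal $1$, that is $S = S^*$, $I = I^*$, $C = C^*$, $A = A^*$, so the only invariant set is the singleton $\{\tilde{\Sigma}_+\}$. LaSalle's invariance principle then yields global asymptotic stability of $\tilde{\Sigma}_+$ in $\Omega \backslash \Omega_0$ whenever $\tilde R_0 > 1$, which is exactly the assertion of the theorem.
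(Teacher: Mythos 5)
Your plan is correct and follows essentially the same route as the paper: a Goh--Volterra logarithmic Lyapunov function (the paper drops the additive constants, writing $S - S^*\ln S$ etc., which has the same derivative), the same equilibrium identities, AM--GM grouping of the ratio terms, and LaSalle's invariance principle. The weights you leave as $a_3$, $a_4$ are taken in the paper to be $\omega/\xi_2$ and $\alpha/\xi_1$, exactly so that the back-flow terms $\omega C$ and $\alpha A$ close the product-equals-one cycles you describe.
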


\begin{proof}
Consider the following Lyapunov function:
\begin{equation}
\label{eq:Lyapunov:function}
V = \left(S - S^* \ln(S)\right) + \left(I - I^* \ln(I)\right) 
+ \frac{\omega}{\xi_2} \left(C - C^* \ln(C)\right) 
+ \frac{\alpha}{\xi_1} \left(A - A^* \ln(A)\right).
\end{equation}
Differentiating $V$ with respect to time gives
\begin{equation*}
\dot{V} = \left(1-\frac{S^*}{S}\right)\dot{S} + \left(1-\frac{I^*}{I}\right)\dot{I} 
+ \frac{\omega}{\xi_2} \left(1-\frac{C^*}{C}\right)\dot{C} + \frac{\alpha}{\xi_1} 
\left(1-\frac{A^*}{A}\right)\dot{A}.
\end{equation*}
Substituting the expressions for the derivatives in $\dot{V}$, 
it follows from \eqref{eq:model:1} that
\begin{equation}
\label{eq:difV:1}
\begin{split}
\dot{V} 
= &\left(1-\frac{S^*}{S}\right)\left[ \Lambda - \beta_1 \left( 
I + \eta_C \, C  + \eta_A  A \right) S - \mu S \right]\\
&+ \left(1-\frac{I^*}{I}\right)\left[  \beta_1 \left( I 
+ \eta_C \, C  + \eta_A  A \right) S - \xi_3 I + \alpha A + \omega C \right]\\
&+ \frac{\omega}{\xi_2}  \left(1-\frac{C^*}{C}\right)\left[  \phi I -  \xi_2 C \right] 
+ \frac{\alpha}{\xi_1} \left(1-\frac{A^*}{A}\right)\left[  \rho I - \xi_1 A \right].
\end{split}
\end{equation}
Using relation $\Lambda = \beta_1 \left( I^* 
+ \eta_C \, C^*  + \eta_A  A^* \right) S^* + \mu S^*$, 
we have from the first equation of system \eqref{eq:model:2} 
at steady-state that \eqref{eq:difV:1} can be written as
\begin{equation*}
\begin{split}
\dot{V} 
= &\left(1-\frac{S^*}{S}\right)\left[ \beta_1 \left( I^* 
+ \eta_C \, C^*  + \eta_A  A^* \right) S^* + \mu S^* 
- \beta_1 \left( I + \eta_C \, C  + \eta_A  A \right) S - \mu S \right]\\
&+ \left(1-\frac{I^*}{I}\right)\left[  \beta_1 \left( I 
+ \eta_C \, C  + \eta_A  A \right) S - \xi_3 I + \alpha A + \omega C \right]\\
&+ \frac{\omega}{\xi_2}  \left(1-\frac{C^*}{C}\right)\left[  \phi I -  \xi_2 C \right] 
+ \frac{\alpha}{\xi_1} \left(1-\frac{A^*}{A}\right)\left[  \rho I - \xi_1 A \right],
\end{split}
\end{equation*}
which can then be simplified to
\begin{equation*}
\begin{split}
\dot{V} = &\left(1-\frac{S^*}{S}\right) \beta_1 I^* S^* 
+ \mu S^* \left( 2 - \frac{S}{S^*} - \frac{S^*}{S}\right) - \beta_1 I S + \beta_1 I S^*\\
&+ \beta_1 ( \eta_C C^* + \eta_A A^*) S^* - \beta_1 (\eta_C C + \eta_A A) S 
- \frac{S^*}{S} \beta_1 (\eta_C C^* + \eta_A A^*) S^*\\ 
&+ S^* \beta_1 (\eta_C C + \eta_A A)
+ \left(1-\frac{I^*}{I}\right)\left[  \beta_1 \left( I + \eta_C \, C  + \eta_A  A \right) S 
- \xi_3 I + \alpha A + \omega C \right]\\
&+ \frac{\omega}{\xi_2}  \left(1-\frac{C^*}{C}\right)\left[  \phi I -  \xi_2 C \right] 
+ \frac{\alpha}{\xi_1} \left(1-\frac{A^*}{A}\right)\left[  \rho I - \xi_1 A \right].
\end{split}
\end{equation*}
Using the relations at the steady state
\begin{equation*}
\xi_3 I^* = \beta_1 (I^* + \eta_C C^* + \eta_A A^*) S^*  + \alpha A^*  + \omega C^*, 
\quad \xi_2 C^* = \phi I^*, 
\quad \xi_1 A^* = \rho I^*,
\end{equation*} 
and after some simplifications, we have
\begin{equation*}
\begin{split}
\dot{V} = &\left( \beta_1 I^* S^* + \mu S^* \right) \left(2 - \frac{S}{S^*} -\frac{S^*}{S} \right) 
+ \beta_1 S^*\left( \eta_C C^* + \eta_A A^* \right) \left( 1 - \frac{S^*}{S} \right)\\
&+ \beta_1 S^*\left( \eta_C C^* + \eta_A A^* \right) \left( 1 - \frac{I}{I^*} \right)
+ \beta_1 S^* \left( \eta_C C + \eta_A A \right) \left( 1 - \frac{I^*}{I} \frac{S}{S^*}\right)\\   
&+ \alpha A^* \left( 1 - \frac{A}{A^*} \frac{I^*}{I} \right) 
+ \omega C^* \left( 1 - \frac{C}{C^*} \frac{I^*}{I} \right)\\
&+  \frac{\omega \phi}{\xi_2} I^* \left( 1 - \frac{I}{I^*} \frac{C^*}{C} \right)
+ \frac{\alpha \rho}{\xi_1} I^* \left( 1 - \frac{I}{I^*} \frac{A^*}{A} \right).
\end{split}
\end{equation*}
The terms between the larger brackets are less than or equal to zero by a well-known inequality: 
the geometric mean is less than or equal to the arithmetic mean. The equality
$\frac{dV}{dt} = 0$ holds if and only if $(S, I, C, A)$ take the equilibrium 
values $(S^*, I^*, C^*, A^*)$. Therefore, by LaSalle's Invariance Principle \cite{LaSalle1976}, 
the endemic equilibrium $\Sigma_+$ is globally asymptotically stable. 
\end{proof}

We conjecture that the endemic equilibrium for positive AIDS-induced death rate ($d>0$) 
is globally asymptotically stable. This remains an open question.


\subsection{Case study for Cape Verde}
\label{sec:model:CapeVerde}

In this section, we calibrate model \eqref{eq:model:1} to the cumulative cases 
of infection by HIV and AIDS in Cape Verde from 1987 to 2014. We show that 
our model predicts well this reality. In Table~\ref{table:realdataCapeVerde}, 
the cumulative cases of infection by HIV and AIDS in Cape Verde are depicted 
for the years 1987--2014 \cite{report:HIV:AIDS:capevert2015}. 
\begin{table}[!htb]
\centering
\caption{Cumulative cases of infection by HIV/AIDS and total population in Cape Verde 
in the period 1987--2014 \cite{report:HIV:AIDS:capevert2015,WorldBank:TotalPop:url}.}
\label{table:realdataCapeVerde}
\begin{tabular}{l l l l l l l l} \hline  \hline
{\small{Year}} & {\small{1987}} & {\small{1988}} & {\small{1989}} & {\small{1990}} 
& {\small{1991}} & {\small{1992}} & {\small{1993}} \\ \hline
{\small{HIV/AIDS}} & {\small{61}} & {\small{107}}  &  {\small{160}} &  {\small{211}} 
&  {\small{244}} & {\small{303}} &  {\small{337}}\\ 
{\small{Population}} & {\small{323972}} & {\small{328861}}  &  {\small{334473}} &  {\small{341256}} 
&  {\small{349326}} & {\small{358473}} &  {\small{368423}}\\ \hline \hline
{\small{Year}} & {\small{1994}} & {\small{1995}} & {\small{1996}} & {\small{1997}} &  {\small{1998}} 
& {\small{1999}} & {\small{2000}} \\ \hline
{\small{HIV/AIDS}} &  {\small{358}} & {\small{395}}  & {\small{432}}		
& {\small{471}} & {\small{560}} & {\small{660}}  &  {\small{779}} \\
{\small{Population}} &  {\small{378763}} & {\small{389156}}  & {\small{399508}}	& {\small{409805}} 
& {\small{419884}}  &  {\small{429576}} &  {\small{438737}}\\ \hline \hline
{\small{Year}} & {\small{2001}} & {\small{2002}} & {\small{2003}} & {\small{2004}} & {\small{2005}}  
& {\small{2006}} & {\small{2007}} \\ \hline
{\small{HIV/AIDS}} &  {\small{913}} & {\small{1064}} &  {\small{1233}} &  {\small{1493}}  &  {\small{1716}}  
& {\small{2015}} & {\small{2334}} \\
{\small{Population}} & {\small{447357}} & {\small{455396}} &  {\small{462675}} &  {\small{468985}} & {\small{474224}}  
& {\small{478265}}	& {\small{481278}} \\ \hline \hline
{\small{Year}} & {\small{2008}} &  {\small{2009}} & {\small{2010}} & {\small{2011}} 
& {\small{2012}} & {\small{2013}} & {\small{2014}}\\ \hline
{\small{HIV/AIDS}} & {\small{2610}} & {\small{2929}} & {\small{3340}}  
&  {\small{3739}} &  {\small{4090}} & {\small{4537}} & {\small{4946}}\\
{\small{Population}} & {\small{483824}}  &  {\small{486673}} &  {\small{490379}} 
&  {\small{495159}} & {\small{500870}} &  {\small{507258}} & {\small{513906}} \\ \hline\hline
\end{tabular}
\end{table}
We consider the initial conditions \eqref{eq:initcond:CV} based 
on \cite{report:HIV:AIDS:capevert2015,url:worlbank:capevert}: 
\begin{equation}
\label{eq:initcond:CV}
S_0 = S(0) = 323911 \, , 
\quad I_0 = I(0) = 61\, , 
\quad C_0 = C(0) = 0\, , 
\quad A_0 = A(0) = 0.
\end{equation}
We borrow the parameter values $\phi = 1$, $\rho = 0.1$, $\alpha = 0.33$ and $\omega = 0.09$ 
from \cite{SilvaTorres:TBHIV:2015}. Following the World Bank data 
\cite{url:worlbank:capevert,WorldBank:TotalPop:url},
the natural death rate is assumed to take the value $\mu = 1/69.54$. 
The recruitment rate $\Lambda = 13045$ was estimated in order to
approximate the values of the total population of Cape Verde
given in Table~\ref{table:realdataCapeVerde}. See 
Figure~\ref{fig:TotalPop}, were we can observe that model 
\eqref{eq:model:1} fits well the total population of Cape Verde.
\begin{figure}[!htb]
\centering
\includegraphics[width=0.6\textwidth]{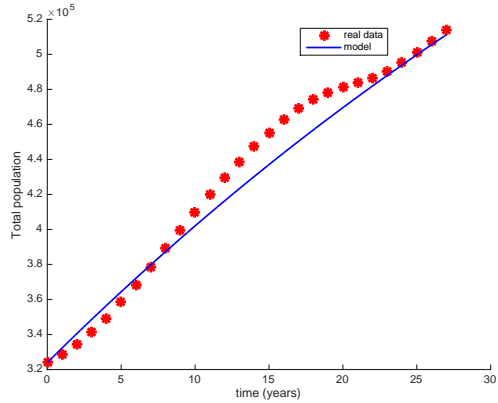}
\caption{Model \eqref{eq:model:1} fitting the total population of Cape Verde 
between 1987 and 2014 \cite{report:HIV:AIDS:capevert2015,WorldBank:TotalPop:url}.
The $l_2$ norm of the difference between the real total population
of Cape Verde and our prediction gives an error of $1.9\%$ of individuals
per year with respect to the total population of Cape Verde in 2014.}
\label{fig:TotalPop}
\end{figure}
The AIDS induced death rate is assume to be $d = 1$ based on \cite{ZwahlenEggerUNAIDS}. 
Two cases are considered: $\eta_C = 0.04$, based on a research study known as \emph{HPTN 052}, 
where it is found that the risk of HIV transmission among heterosexual serodiscordant 
couples is 96\% lower when the HIV-positive partner is on treatment \cite{Cohen:NEJM:2011}; 
and $\eta_C = 0.015$, which means that HIV-infected individuals under ART treatment 
have a very low probability of transmitting HIV, based on \cite{DelRomero:2016}. 
For the modification parameter $\eta_A \geq 1$ that accounts for the relative
infectiousness of individuals with AIDS symptoms, in comparison to those
infected with HIV with no AIDS symptoms, we assume $\eta_A = 1.3$ and $\eta_A = 1.35$, 
based in \cite{art:viral:load}. We estimated the value of the HIV transmission rate 
$\beta$ for $(\eta_C, \eta_A) = (0.04, 1.35)$ equal to $0.695$ and for $(\eta_C, \eta_A) = (0.015, 1.3)$ 
equal to $0.752$, and show that the model \eqref{eq:model:1} predicts well the reality of Cape Verde 
for these parameter values: see Figure~\ref{fig:model:fit}. 
All the considered parameter values are resumed in Table~\ref{table:parameters:HIV:CV}. 
\begin{figure}[!htb]
\centering
\subfloat[\footnotesize{$(\beta, \eta_C, \eta_A) = (0.752, 0.015, 1.3)$}]{\label{model:fit:1}
\includegraphics[width=0.48\textwidth]{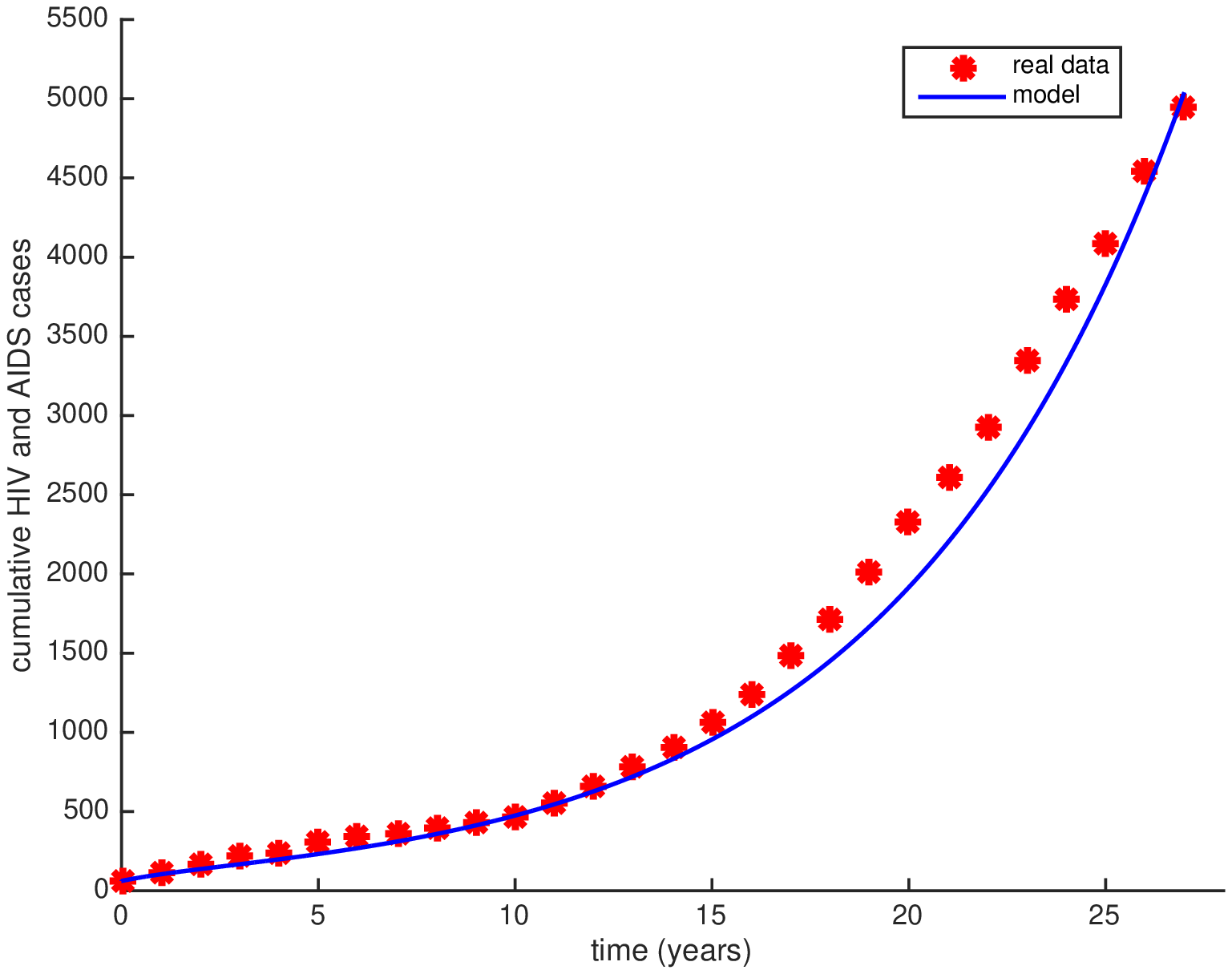}}
\subfloat[\footnotesize{$(\beta, \eta_C, \eta_A) = (0.695, 0.04, 1.35)$}]{\label{model:fit:2}
\includegraphics[width=0.48\textwidth]{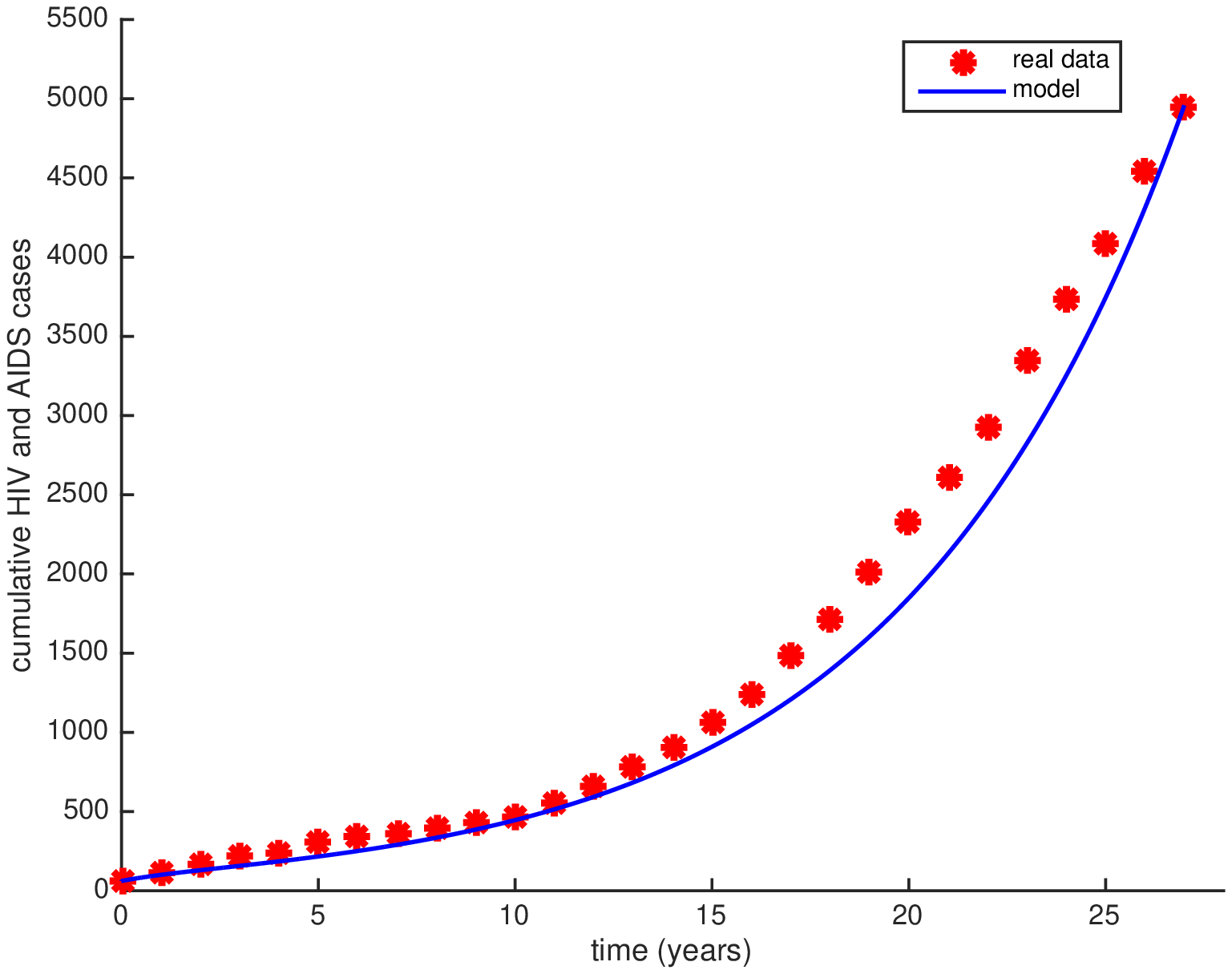}}
\caption{Model \eqref{eq:model:1} fitting the data of cumulative cases of HIV and AIDS infection  
in Cape Verde between 1987 and 2014 \cite{report:HIV:AIDS:capevert2015}. 
The $l_2$ norm of the difference between the real data and the cumulative cases of infection 
by HIV/AIDS given by model \eqref{eq:model:1} gives, in both cases, 
an error of $0.03\%$ of individuals per year with respect 
to the total population of Cape Verde in 2014.}
\label{fig:model:fit}
\end{figure}
\begin{table}[!htb]
\centering
\caption{Parameters of the HIV/AIDS model \eqref{eq:model:1} for Cape Verde.}
\label{table:parameters:HIV:CV}
\begin{tabular}{l  p{6.5cm} l l}
\hline \hline
{\small{Symbol}} &  {\small{Description}} & {\small{Value}} & {\small{Reference}}\\
\hline
{\small{$N(0)$}} & {\small{Initial population}} & {\small{$323 972$}}  
& {\small{\cite{url:worlbank:capevert}}}\\
{\small{$\Lambda$}} & {\small{Recruitment rate}} & {\small{$13045$}}  
& {\small{\cite{url:worlbank:capevert} }}\\
{\small{$\mu$}} & {\small{Natural death rate}} & {\small{$1/69.54$}} 
& {\small{\cite{url:worlbank:capevert} }}\\
{\small{$\beta$}} & {\small{HIV transmission rate}} & {\small{$0.752$}} & {\small{Estimated}}\\
{\small{$\eta_C$}} & {\small{Modification parameter}} & {\small{$0.015$, $0.04$}} & {\small{Assumed}}\\
{\small{$\eta_A$}} & {\small{Modification parameter}} & {\small{$1.3$, $1.35$}} & {\small{Assumed}}\\	
{\small{$\phi$}} & {\small{HIV treatment rate for $I$ individuals}} &  {\small{$1$}} 
& {\small{\cite{SilvaTorres:TBHIV:2015}}} \\
{\small{$\rho$}} & {\small{Default treatment rate for $I$ individuals}}
& {\small{$0.1 $}} & {\small{\cite{SilvaTorres:TBHIV:2015}}}\\
{\small{$\alpha$}} & {\small{AIDS treatment rate}}
& {\small{$0.33 $}} & {\small{\cite{SilvaTorres:TBHIV:2015}}}\\
{\small{$\omega$}} & {\small{Default treatment rate for $C$ individuals}}
& {\small{$0.09$}} & {\small{\cite{SilvaTorres:TBHIV:2015}}}\\
{\small{$d$}} & {\small{AIDS induced death rate}} & {\small{$1$}} 
& {\small{\cite{ZwahlenEggerUNAIDS}}}\\
\hline \hline
\end{tabular}
\end{table}

For the triplets $(\beta, \eta_C, \eta_A) = (0.752, 0.015, 1.3)$ 
and $(\beta, \eta_C, \eta_A) = (0.695, 0.04, 1.35)$, and the other parameter 
values from Table~\ref{table:parameters:HIV:CV}, we have that the basic 
reproduction number is given by $R_0 = 4.0983$ and $R_0 = 4.5304$, respectively. 


\section{The SICAE model} 
\label{sec:SICAE:model}

Now we generalize the model proposed in Section~\ref{sec:SICA:model} 
by adding the possibility of providing PrEP to susceptible individuals.
We add a class of individuals to the total population $N$, denoted by $E$, 
which represents the individuals that are under PrEP. The proportion 
of susceptible individuals that takes PrEP is denoted by $\psi$.
We assume that PrEP is effective so that all susceptible individuals 
under PrEP treatment are transferred to class $E$. The individuals 
that stop PrEP become susceptible individuals again, at a rate $\theta$. 
Individuals under PrEP may suffer of natural death at a rate $\mu$. 
The model is given by the following system of ordinary differential equations:
\begin{equation}
\label{eq:model:PreP}
\begin{cases}
\dot{S}(t) = \Lambda - \frac{\beta \left( I(t) + \eta_C \, C(t)  
+ \eta_A  A(t) \right)}{N(t)} S(t) - \mu S(t) - \psi S(t) + \theta E(t),\\[0.2 cm]
\dot{I}(t) = \frac{\beta \left( I(t) + \eta_C \, C(t)  
+ \eta_A  A(t) \right)}{N(t)} S(t) - (\rho + \phi + \mu) I(t) 
+ \alpha A(t)  + \omega C(t), \\[0.2 cm]
\dot{C}(t) = \phi I(t) - (\omega + \mu)C(t),\\[0.2 cm]
\dot{A}(t) =  \rho \, I(t) - (\alpha + \mu + d) A(t) ,\\[0.2 cm]
\dot{E}(t) = \psi S(t) - (\mu + \theta) E(t).
\end{cases}
\end{equation}
We consider the biologically feasible region
\begin{equation*}
\Omega_P = \left\lbrace (S, I, C, A, E) \in \R_{+0}^5 \, 
: \, S \leq \frac{ \left( \theta+\mu \right) \Lambda}{
\mu\, \left( \theta+\psi+\mu \right) }, 
E \leq \frac{\psi\,\Lambda}{\mu\, \left( \theta+\psi+\mu\right) }, \, 
N \leq \frac{\Lambda}{\mu} \right\rbrace.
\end{equation*}


\subsection{Existence and stability of the disease-free equilibrium}

Model \eqref{eq:model:PreP} has a disease-free equilibrium, given by
\begin{equation}
\label{eq:DFE:Prep}
\Sigma_0 = \left( S^0, I^0, C^0, A^0, E^0  \right) 
= \left( {\frac { \left( \theta+\mu \right) \Lambda}{\mu\, 
\left( \theta+\psi+\mu \right)}} ,0, 0,0, 
{\frac {\psi\,\Lambda}{\mu\, \left( \theta+\psi+\mu\right)}}\right).
\end{equation} 
The linear stability of $\Sigma_0$ can be obtained using the next-generation 
method on system \eqref{eq:model:PreP}. Following \cite{van:den:Driessche:2002}, 
the basic reproduction number for model \eqref{eq:model:PreP} 
is given by \eqref{eq:R0:model:1}, that is, 
\begin{equation*}
R_0 = \frac{ \beta\, \left(  \xi_2  \left( \xi_1 +\rho\, \eta_A \right) 
+ \eta_C \,\phi \, \xi_1 \right) }{\mu\, \left(  \xi_2  \left( \rho + \xi_1
\right) +\phi\, \xi_1 +\rho\,d \right) +\rho\,\omega\,d} 
= \frac{\mathcal{N}}{\mathcal{D}},
\end{equation*}
where $\xi_1 = \alpha + \mu + d$, 
$\xi_2 = \omega + \mu$, and $\xi_3 = \rho + \phi + \mu$.
Thus, from Theorem~2 of \cite{van:den:Driessche:2002},
the following result is established.

\begin{lemma}
\label{lemma:localstab:DFE:Prep}
The disease free equilibrium $\Sigma_0$ of model \eqref{eq:model:PreP}, 
given by \eqref{eq:DFE:Prep}, is locally asymptotically stable 
if $R_0 < 1$, and unstable if $R_0 > 1$. 
\end{lemma}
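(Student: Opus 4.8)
The plan is to establish this lemma exactly as the next-generation framework of van den Driessche and Watmough \cite{van:den:Driessche:2002} prescribes, since the conclusion (local asymptotic stability for $R_0<1$, instability for $R_0>1$) is precisely the content of their Theorem~2 once $R_0$ is identified as the spectral radius of the next-generation matrix. First I would order the state variables so that the three infected compartments $(I,C,A)$ precede the uninfected ones $(S,E)$, and split the infected subsystem of \eqref{eq:model:PreP} as $\dot{x}=\mathcal{F}(x)-\mathcal{V}(x)$, where $\mathcal{F}$ collects \emph{only} the appearance of new infections and $\mathcal{V}$ collects every other transfer. Here the sole new-infection term is the incidence $\frac{\beta(I+\eta_C C+\eta_A A)}{N}S$ entering the $I$ equation; the flows $\alpha A$ and $\omega C$ into $I$, and $\phi I$, $\rho I$ into $C$ and $A$, are internal transfers and belong to $\mathcal{V}$, together with the removal rates $\xi_3,\xi_2,\xi_1$.

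Next I would linearise at the disease-free equilibrium $\Sigma_0$ of \eqref{eq:DFE:Prep}. Writing $F=D\mathcal{F}(\Sigma_0)$ and $V=D\mathcal{V}(\Sigma_0)$ restricted to the infected compartments gives
\begin{equation*}
F=\begin{pmatrix} \beta\frac{S^0}{N^0} & \beta\eta_C\frac{S^0}{N^0} & \beta\eta_A\frac{S^0}{N^0}\\ 0 & 0 & 0\\ 0 & 0 & 0 \end{pmatrix},
\qquad
V=\begin{pmatrix} \xi_3 & -\omega & -\alpha\\ -\phi & \xi_2 & 0\\ -\rho & 0 & \xi_1 \end{pmatrix},
\end{equation*}
where $N^0=\Lambda/\mu$ at $\Sigma_0$. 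Because $F$ has rank one, the next-generation matrix $FV^{-1}$ has a single nonzero eigenvalue, equal to its $(1,1)$ entry, so the spectral-radius computation reduces to reading off the first column of $V^{-1}$. Substituting $S^0$ from \eqref{eq:DFE:Prep} and simplifying then identifies the spectral radius of $FV^{-1}$ with the quantity $R_0$ given in \eqref{eq:R0:model:1}.

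Finally I would check that \eqref{eq:model:PreP} satisfies hypotheses (A1)--(A5) of \cite{van:den:Driessche:2002}: nonnegativity of $\mathcal{F}$ and of the relevant components of $\mathcal{V}$, the fact that $V$ is a nonsingular M-matrix (so that $V^{-1}\geq 0$ and the construction is legitimate), and local asymptotic stability of the disease-free subsystem. The last point is the one genuinely needing verification: setting $I=C=A=0$ leaves the $(S,E)$ subsystem whose Jacobian at $\Sigma_0$ is $\left(\begin{smallmatrix}-(\mu+\psi)&\theta\\ \psi&-(\mu+\theta)\end{smallmatrix}\right)$, with negative trace $-(2\mu+\psi+\theta)$ and positive determinant $\mu(\mu+\psi+\theta)$, so both eigenvalues have negative real part. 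With all hypotheses in place, Theorem~2 of \cite{van:den:Driessche:2002} yields local asymptotic stability of $\Sigma_0$ when $R_0<1$ and instability when $R_0>1$.

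The main obstacle here is organizational rather than computational: getting the $\mathcal{F}$/$\mathcal{V}$ decomposition right so that the intra-compartment transfers among $I$, $C$, $A$ are not miscounted as new infections, and confirming the M-matrix structure of $V$ that underwrites the next-generation construction. Once that bookkeeping is settled, the rank-one structure of $F$ makes the eigenvalue computation short and the remaining verifications routine.
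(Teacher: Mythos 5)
Your proposal follows exactly the route the paper takes: the paper's entire proof of this lemma is the remark that the next-generation method of \cite{van:den:Driessche:2002} applies and that Theorem~2 of that reference then yields the conclusion. You have simply supplied the bookkeeping the paper omits --- the $\mathcal{F}/\mathcal{V}$ splitting (correctly placing $\alpha A$, $\omega C$, $\phi I$, $\rho I$ in $\mathcal{V}$ and only the incidence into $I$ in $\mathcal{F}$), the matrices $F$ and $V$ (and indeed $\det V=\mathcal{D}$, with $V$ a nonsingular M-matrix), and the verification that the disease-free $(S,E)$ subsystem is asymptotically stable, whose trace and determinant you compute correctly.

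One step deserves scrutiny, and your added detail exposes it rather than creates it: with $S^0$ as in \eqref{eq:DFE:Prep} and $N^0=\Lambda/\mu$, one has $S^0/N^0=(\theta+\mu)/(\theta+\psi+\mu)<1$ whenever $\psi>0$, so the spectral radius of your $FV^{-1}$ is $\frac{\theta+\mu}{\theta+\psi+\mu}\cdot\frac{\mathcal{N}}{\mathcal{D}}$ and the final ``simplifying'' you describe does not recover the quantity \eqref{eq:R0:model:1}. (The paper makes the same identification without comment, even though its own reproduction number for the limiting system \eqref{eq:model:PreP:2} carries precisely the analogous factor $\mu/(\mu+\psi)$.) This is harmless for the stability half of the lemma, since $R_0<1$ forces the true threshold quantity below one as well; but the instability assertion for $R_0>1$ only follows from Theorem~2 of \cite{van:den:Driessche:2002} when the actual next-generation spectral radius exceeds one, i.e.\ when $R_0>(\theta+\psi+\mu)/(\theta+\mu)$. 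To make your write-up airtight you should either state the threshold as $\frac{\theta+\mu}{\theta+\psi+\mu}R_0$ or explain why \eqref{eq:R0:model:1} is nonetheless the quantity governing instability.
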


Biologically speaking, Lemma~\ref{lemma:localstab:DFE:Prep} implies 
that HIV infection can be eliminated from the community (when $R_0 < 1$) 
if the initial size of the population is in the basin of attraction 
of $\Sigma_0$. To ensure that elimination of HIV infection 
is independent of the initial size of the population, 
it is necessary to show that the disease free equilibrium 
is globally asymptotically stable \cite{Liu:Zhang:MCM:2011}. 
This is obtained in what follows. 

\begin{theorem}
\label{thm:gasS0}
The disease free equilibrium $\Sigma_0$ 
is globally asymptotically stable for $R_0 < 1$. 
\end{theorem}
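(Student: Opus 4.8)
The plan is to prove global asymptotic stability of the disease-free equilibrium $\Sigma_0$ for the SICAE model \eqref{eq:model:PreP} when $R_0 < 1$. First I would reduce the problem by exploiting the structure of the infected compartments. The key observation is that the $E$ equation decouples from the infected variables: once $I = C = A = 0$, the $(S,E)$ subsystem is linear and converges to the equilibrium values $S^0, E^0$ given in \eqref{eq:DFE:Prep}. So the real work is showing that the infected classes $(I,C,A)$ vanish asymptotically, and that this forces the susceptible/PrEP classes to their equilibrium. The natural tool here, given that the local result already follows from the next-generation characterization, is a Lyapunov function built from the infected variables together with the comparison/matrix-theoretic machinery of Theorem~2 in \cite{van:den:Driessche:2002}.

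\medskip

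Concretely, I would write the infected subsystem in the vectorial form $\dot{x} \le (F - V)x$, where $x = (I,C,A)^{\mathsf{T}}$, and $F$, $V$ are the new-infection and transition matrices computed in the next-generation derivation of $R_0$. The crucial inequality is that, on the invariant region $\Omega_P$, one has $S/N \le 1$, so the nonlinear incidence term is bounded above by its linearization at the disease-free state; this replaces the actual dynamics by a cooperative linear comparison system $\dot{z} = (F-V)z$. Since $R_0 < 1$ is equivalent to the spectral radius of $FV^{-1}$ being less than one, which in turn is equivalent to $-(F-V)$ being a nonsingular $M$-matrix with stable spectrum, the comparison system satisfies $z(t) \to 0$ exponentially. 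A standard comparison argument (see \cite{Lakshmikantham:1989}) then yields $(I(t),C(t),A(t)) \to (0,0,0)$ as $t \to \infty$ for every trajectory starting in $\Omega_P$. An equivalent route is to use the left Perron eigenvector $w > 0$ of $V^{-1}F$ to define the linear Lyapunov function $L = w^{\mathsf{T}} x = w_1 I + w_2 C + w_3 A$ and verify that $\dot{L} \le (R_0 - 1)\,w^{\mathsf{T}} V x \le 0$ along solutions, with equality only on the disease-free face.

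\medskip

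Having established that the infected classes decay to zero, I would finish by analyzing the limiting system. Substituting $I = C = A = 0$ into the $S$ and $E$ equations leaves the linear asymptotically autonomous system
\begin{equation*}
\dot{S} = \Lambda - (\mu + \psi) S + \theta E, \qquad \dot{E} = \psi S - (\mu + \theta) E,
\end{equation*}
whose coefficient matrix has eigenvalues with negative real part, so $(S,E) \to (S^0, E^0)$. Invoking the theory of asymptotically autonomous systems (or, more directly, LaSalle's Invariance Principle \cite{LaSalle1976} applied on the compact positively invariant set $\Omega_P$, with the largest invariant set contained in $\{L \equiv 0\}$ being the singleton $\{\Sigma_0\}$) gives global convergence of the full flow to $\Sigma_0$. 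Combined with the local asymptotic stability already furnished by Lemma~\ref{lemma:localstab:DFE:Prep}, this establishes global asymptotic stability.

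\medskip

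The main obstacle I anticipate is the bounding step $S/N \le 1$ and the careful justification that the nonlinear incidence is dominated by the linear comparison system uniformly on $\Omega_P$; this is where positivity of all compartments and the invariance of $\Omega_P$ must be used, and it is the only place where the PrEP-specific structure (the extra $E$ class shrinking $S$ relative to $N$) enters. Everything else is a routine application of $M$-matrix theory and LaSalle's principle, so I would spend the bulk of the write-up making the comparison argument airtight rather than on the eigenvalue computations, which are already encoded in the formula for $R_0$.
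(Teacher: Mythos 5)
Your proposal is correct and follows essentially the same route as the paper: the paper's proof is exactly the linear Lyapunov function $V = w_1 I + w_2 C + w_3 A$ you describe as the ``equivalent route,'' with the weights written out explicitly (they are, up to scaling, the left Perron-eigenvector coefficients from the next-generation decomposition), the bound $S/N \leq 1$ on $\Omega_P$ giving $\dot{V} \leq \mathcal{D}(R_0-1)(I+\eta_C C+\eta_A A) \leq 0$, and LaSalle's Invariance Principle together with the convergence of the residual linear $(S,E)$ subsystem to $(S^0,E^0)$ finishing the argument.
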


\begin{proof}
Consider the following Lyapunov function:
\begin{equation*}
\begin{split}
V = &\left( \xi_1 \xi_2 + \xi_1 \phi \eta_C + \xi_2 \rho \eta_A \right) I 
+ \left( \xi_1 \omega + \xi_1 \xi_3 \eta_C + \rho \eta_A \omega - \eta_C \rho \alpha \right) C\\ 
&+ \left( \alpha \xi_2 + \xi_2 \xi_3 \eta_A + \phi \eta_C \alpha - \phi \eta_A \omega \right) A.
\end{split}
\end{equation*}
The time derivative of $V$ computed along the solutions of \eqref{eq:model:1} is given by
\begin{equation*}
\begin{split}
\dot{V} 
&= \left( \xi_1 \xi_2 + \xi_1 \phi \eta_C + \xi_2 \rho \eta_A \right) \dot{I} 
+ \left( \xi_1 \omega + \xi_1 \xi_3 \eta_C + \rho \eta_A \omega 
- \eta_C \rho \alpha \right) \dot{C}\\ 
& \quad + \left( \alpha \xi_2 + \xi_2 \xi_3 \eta_A 
+ \phi \eta_C \alpha - \phi \eta_A \omega \right) \dot{A}\\
&= \left( \xi_1 \xi_2 + \xi_1 \phi \eta_C + \xi_2 \rho \eta_A \right) 
\left( \frac{\beta}{N} \left( I + \eta_C \, C  + \eta_A  A \right) S 
- \xi_3 I + \alpha A + \omega C \right)\\
& \quad + \left( \xi_1 \omega + \xi_1 \xi_3 \eta_C + \rho \eta_A \omega 
- \eta_C \rho \alpha \right) \left( \phi I - \xi_2 C \right)\\ 
& \quad  + \left( \alpha \xi_2 + \xi_2 \xi_3 \eta_A + \phi \eta_C \alpha 
- \phi \eta_A \omega \right) \left( \rho \, I - \xi_1 A \right).
\end{split}
\end{equation*}	
After some simplifications, we have 
\begin{equation*}
\begin{split}
\dot{V} &= (\xi_1 \xi_2 +  \xi_1 \phi \eta_C  + \xi_2 \rho \eta_A) 
\frac{\beta I S}{N} + (- \xi_1 \xi_2 \xi_3 + \xi_1 \omega \phi + \alpha \xi_2 \rho ) I\\
& \quad + \eta_C (\xi_1 \xi_2 + \xi_1 \phi \eta_C + \xi_2 \rho \eta_A ) 
\frac{\beta C S}{N} + \eta_C (- \xi_1 \xi_3 \xi_2 + \xi_1 \phi \omega +  \rho \alpha \xi_2) C\\
& \quad + \eta_A (\xi_1 \xi_2 + \xi_1 \phi \eta_C + \xi_2 \rho \eta_A ) 
\frac{\beta A S}{N} + \eta_A (- \xi_2 \xi_3 \xi_1 + \phi \omega \xi_1 + \xi_2 \rho \alpha ) A\\
&= \mathcal{D}\left(R_0 \frac{S}{N} -1\right) I + \eta_C \mathcal{D}\left(R_0 \frac{S}{N} -1\right) C 
+ \eta_A \mathcal{D}\left(R_0 \frac{S}{N} -1\right) A\\
&\leq \mathcal{D}(R_0 -1) I + \eta_C \mathcal{D}(R_0 -1) C 
+ \eta_A \mathcal{D}(R_0 -1) A \quad (\text{because} \, S \leq N \, \text{ in } \, \Omega)\\
& \leq 0 \quad \text{for} \,\,  R_0 < 1.
\end{split}
\end{equation*}
Because all model parameters are nonnegative, it follows that $\dot{V} \leq 0$ 
for $R_0 < 1$ with $\dot{V} = 0$ if, and only if, $I=C=A=0$. Substituting 
$(I, C, A ) = (0, 0, 0)$ into the equations for $S$ and $E$ in system \eqref{eq:model:PreP} 
shows, respectively, that $S \to S^0$ and $E \to E^0$ as $t \to \infty$. Thus, 
it follows from LaSalle's Invariance Principle \cite{LaSalle1976} 
that every solution of system \eqref{eq:model:PreP} with initial conditions in $\Omega$ 
approaches the disease free equilibrium $\Sigma_0$ as $t \to \infty$ whenever $R_0 < 1$. 
\end{proof}

The epidemiological significance of Theorem~\ref{thm:gasS0} is that 
HIV infection will be eliminated from the population if the threshold quantity, 
$R_0$, can be brought to a value less than unity.  


\subsection{Existence and stability of the endemic equilibrium}

The unique endemic equilibrium of model \eqref{eq:model:PreP} 
exists whenever $R_0 > 1$ and is given by
\begin{equation*}
\begin{split}
S^* &= \frac{ \Lambda \, \xi_4 \left(\xi_1 (\phi + \xi_2) + \rho \xi_2 \right)}{\mathcal{F} }, 
\quad \quad 
I^* = \frac{- \xi_4 \, \Lambda \xi_1 \xi_2 (\mathcal{D} -\mathcal{N})}{\mathcal{D} \, \mathcal{F}},\\
C^* &= \frac{- \xi_4 \, \Lambda \phi \xi_1 (\mathcal{D} -\mathcal{N})}{\mathcal{D} \, \mathcal{F}},\\
A^* &= \frac{- \xi_4 \, \Lambda \rho \xi_2 (\mathcal{D} -\mathcal{N})}{\mathcal{D} \, \mathcal{F}},
\quad \quad
E^* = \frac{\psi \Lambda \left(\xi_1 (\phi + \xi_2) + \rho \xi_2 \right)}{\mathcal{F}},
\end{split}
\end{equation*}
where $\mathcal{F} = (\mathcal{N} - \rho d \xi_2)\theta + (\mathcal{D} 
- \rho d \xi_2) \psi - \mu( \rho d \xi_2 -\mathcal{N})$ and $\xi_4 = \theta + \mu$. 
We investigate the global stability of the endemic equilibrium 
of model \eqref{eq:model:PreP} for the case when the associated 
AIDS-induced mortality is negligible ($d=0$) and there is a 
strict adherence to PrEP, that is, $\theta = 0$. Adding the equations 
of model \eqref{eq:model:PreP} with $d=0$ and $\theta = 0$ 
gives $\dot{N} = \Lambda - \mu N$, so that $N \to \frac{\Lambda}{\mu}$ 
as $t \to \infty$. Thus, $\frac{\Lambda}{\mu}$ is an upper bound of $N(t)$, 
provided $N(0) \leq \frac{\Lambda}{\mu}$. Further, if $N(0) > \frac{\Lambda}{\mu}$, 
then $N(t)$ decreases to this level. Using $N = \frac{\Lambda}{\mu}$ in the force of infection 
$\lambda = \frac{\beta}{N} \left( I + \eta_C \, C  + \eta_A  A \right)$ 
gives a limiting (mass action) system. Then, the force of infection becomes
$$
\lambda = \beta_1 \left( I + \eta_C \, C  + \eta_A  A \right), 
\quad \text{where} \quad \beta_1 = \frac{\beta \mu}{\Lambda}.
$$
Therefore, we consider the following model:
\begin{equation}
\label{eq:model:PreP:2}
\begin{cases}
\dot{S}(t) = \Lambda - \beta_1 \left( I(t) + \eta_C \, C(t)  
+ \eta_A  A(t) \right) S(t) - (\mu + \psi) S(t),\\[0.2 cm]
\dot{I}(t) = \beta_1 \left( I(t) + \eta_C \, C(t)  
+ \eta_A  A(t) \right) S(t) - \xi_3 I(t) + \alpha A(t) + \omega C(t), \\[0.2 cm]
\dot{C}(t) = \phi I(t) - \xi_2 C(t),\\[0.2 cm]
\dot{A}(t) =  \rho \, I(t) - \xi_1 A(t) ,\\[0.2 cm]
\dot{E}(t) = \psi S(t) - \mu E(t).
\end{cases}
\end{equation}
For system \eqref{eq:model:PreP:2}, the basic reproduction number 
is given by $R_0 = \frac{\Lambda \mathcal{N}_1}{(\mu + \psi) \mathcal{D}}$ 
with $\mathcal{N}_1 = \beta_1 \, \left(  \xi_2  \left( \xi_1 +\rho\, 
\eta_A \right) + \eta_C \,\phi \, \xi_1 \right)$. When $R_0 > 1$, 
system \eqref{eq:model:PreP:2} has a unique endemic equilibrium 
$\tilde{\Sigma}_+ = (\tilde{S}, \tilde{I}, \tilde{C}, \tilde{A}, \tilde{E})$ given by
\begin{equation}
\label{eq:EE:Prep:2}
\begin{split}
\tilde{S} &= \frac{ \mu \left(\xi_1 (\phi + \xi_2) 
+ \rho \xi_2 \right)}{{ \beta_1 (\xi_1(\xi_2 + \eta_C \phi)+ \eta_A \rho \xi_2)} }, 
\quad 
\tilde{I} = \frac{\xi_1 \xi_2 (\mathcal{N} -\mathcal{D})}{\mathcal{D}_1 },\\ 
\tilde{C} &= \frac{\phi \xi_1 (\mathcal{N}-\mathcal{D})}{\mathcal{D}_1},
\quad  
\tilde{A} = \frac{\rho \xi_2 (\mathcal{N} -\mathcal{D})}{\mathcal{D}_1},
\quad 
\tilde{E} = \frac{\psi \left(\xi_1 (\phi + \xi_2) 
+ \rho \xi_2 \right)}{\beta_1 (\xi_1(\xi_2 + \eta_C \phi)+\eta_A \rho \xi_2)},
\end{split}
\end{equation}
where $\mathcal{D}_1 = \beta_1 \mu (\rho \xi_2 
+ \xi_1 (\phi + \xi_2))(\xi_1(\xi_2 + \eta_C \phi)+ \eta_A \rho \xi_2)$. 
Since $E$ does not appear in the first four equations 
of system \eqref{eq:model:PreP:2}, 
we only need to consider the reduced system
\begin{equation}
\label{eq:model:PreP:reduced}
\begin{cases}
\dot{S}(t) = \Lambda - \beta_1 \left( I(t) + \eta_C \, C(t)  
+ \eta_A  A(t) \right) S(t) - (\mu + \psi) S(t),\\[0.2 cm]
\dot{I}(t) = \beta_1 \left( I(t) + \eta_C \, C(t)  + \eta_A  A(t) \right) S(t) 
- \xi_3 I(t) + \alpha A(t)  + \omega C(t), \\[0.2 cm]
\dot{C}(t) = \phi I(t) - \xi_2 C(t),\\[0.2 cm]
\dot{A}(t) =  \rho \, I(t) - \xi_1 A(t).
\end{cases}
\end{equation}
Whenever $R_0 > 1$, system \eqref{eq:model:PreP:reduced} has a unique 
endemic equilibrium $\Sigma_1 = (\tilde{S}, \tilde{I}, \tilde{C}, \tilde{A})$ 
with $\tilde{S}$, $\tilde{I}$, $\tilde{C}$ and $\tilde{A}$ 
defined by \eqref{eq:EE:Prep:2}. 
Consider the Lyapunov function \eqref{eq:Lyapunov:function} given by
\begin{equation*}
V = \left(S - S^* \ln(S)\right) + \left(I - I^* \ln(I)\right) 
+ \frac{\omega}{\xi_2} \left(C - C^* \ln(C)\right) 
+ \frac{\alpha}{\xi_1} \left(A - A^* \ln(A)\right).
\end{equation*}
Differentiating $V$ with respect to time gives
\begin{equation*}
\dot{V} = \left(1-\frac{S^*}{S}\right)\dot{S} + \left(1-\frac{I^*}{I}\right)\dot{I} 
+ \frac{\omega}{\xi_2} \left(1-\frac{C^*}{C}\right)\dot{C} + \frac{\alpha}{\xi_1} 
\left(1-\frac{A^*}{A}\right)\dot{A}.
\end{equation*}
Substituting the expressions for the derivatives in $\dot{V}$, 
it follows from \eqref{eq:model:PreP:2} that
\begin{equation}
\label{eq:difV:2}
\begin{split}
\dot{V} 
= &\left(1-\frac{S^*}{S}\right)\left[ \Lambda - \beta_1 \left( I 
+ \eta_C \, C  + \eta_A  A \right) S - \mu S - \psi S \right]\\
&+ \left(1-\frac{I^*}{I}\right)\left[  \beta_1 \left( I + \eta_C \, C 
+ \eta_A  A \right) S - \xi_3 I + \alpha A + \omega C \right]\\
&+ \frac{\omega}{\xi_2}  \left(1-\frac{C^*}{C}\right)\left[  \phi I -  \xi_2 C \right] 
+ \frac{\alpha}{\xi_1} \left(1-\frac{A^*}{A}\right)\left[\rho I - \xi_1 A \right].
\end{split}
\end{equation}
Using relation $\Lambda = \beta_1 \left( I^* + \eta_C \, C^*  
+ \eta_A  A^* \right) S^* + \mu S^* + \psi S^*$, we have from the 
first equation of system \eqref{eq:model:PreP:2} at steady-state 
that \eqref{eq:difV:2} can be written as
\begin{equation*}
\begin{split}
\dot{V} 
= \left(1-\frac{S^*}{S}\right) &\biggl[ 
\beta_1 \left( I^* + \eta_C \, C^*  + \eta_A  A^* \right) S^* + \mu S^* + \psi S^*\\ 
&- \beta_1 \left( I + \eta_C \, C  + \eta_A  A \right) S - \mu S - \psi S \biggr]\\
&+ \left(1-\frac{I^*}{I}\right)\left[  \beta_1 \left( I + \eta_C \, C  + \eta_A  A \right) S 
- \xi_3 I + \alpha A + \omega C \right]\\
&+ \frac{\omega}{\xi_2}  \left(1-\frac{C^*}{C}\right)\left[  \phi I -  \xi_2 C \right] 
+ \frac{\alpha}{\xi_1} \left(1-\frac{A^*}{A}\right)\left[  \rho I - \xi_1 A \right],
\end{split}
\end{equation*}
which can be simplified to
\begin{equation*}
\begin{split}
\dot{V} = &\left(1-\frac{S^*}{S}\right) \beta_1 I^* S^* 
+ \mu S^* \left( 2 - \frac{S}{S^*} - \frac{S^*}{S}\right) 
+ \psi S^* \left( 2 - \frac{S}{S^*} - \frac{S^*}{S}\right) + \beta_1 I S^*\\
&+ \beta_1 S^*  ( \eta_C C^* + \eta_A A^*) \left(1 - \frac{S^*}{S}\right) 
+ S^* \beta_1 (\eta_C C + \eta_A A)
- \xi_3 I + \frac{\omega}{\xi_2} \phi I + \frac{\alpha}{\xi_1} \rho I \\
&-\frac{I^*}{I} \left[  \beta_1 \left( I + \eta_C \, C  
+ \eta_A  A \right) S - \xi_3 I + \alpha A + \omega C \right]\\
&- \frac{\omega}{\xi_2}  \frac{C^*}{C} \left[  \phi I -  \xi_2 C \right]
- \frac{\alpha}{\xi_1} \frac{A^*}{A} \left[  \rho I - \xi_1 A \right].
\end{split}
\end{equation*}
Using the relations 
\begin{equation*}
\xi_3 I^* = \beta_1 (I^* + \eta_C C^* + \eta_A A^*) S^*  + \alpha A^*  + \omega C^*, 
\quad \xi_2 C^* = \phi I^*, 
\quad \xi_1 A^* = \rho I^*
\end{equation*} 
at the steady state, after some simplifications we have
\begin{equation*}
\begin{split}
\dot{V} = &\beta_1 I^* S^* \left(1-\frac{S^*}{S}\right)  
+ \mu S^* \left( 2 - \frac{S}{S^*} - \frac{S^*}{S}\right) 
+ \psi S^* \left( 2 - \frac{S}{S^*} - \frac{S^*}{S}\right)\\
&+ \beta_1 S^*  (\eta_C C^* + \eta_A A^*) \left(1 - \frac{S^*}{S} \right) 
+ \beta_1 S^*  ( \eta_C C^* + \eta_A A^*) \left(1 - \frac{I}{I^*}\right)\\  
&+ S^* \beta_1 (\eta_C C + \eta_A A) \left( 1 - \frac{I^*}{I} \frac{S}{S^*}\right) \\
&+ \alpha A^* \left( 1 - \frac{A}{A^*} \frac{I^*}{I} \right) 
+ \omega C^* \left( 1 - \frac{C}{C^*} \frac{I^*}{I} \right)\\
&+ \frac{\omega \phi}{\xi_2} I^* \left( 1 - \frac{I}{I^*} \frac{C^*}{C} \right)
+ \frac{\alpha \rho}{\xi_1} I^* \left( 1 - \frac{I}{I^*} \frac{A^*}{A} \right).
\end{split}
\end{equation*}
The terms between the larger brackets are less than or equal to zero 
because the geometric mean is less than or equal to the arithmetic mean. 
Equality $\frac{dV}{dt} = 0$  holds if and only if $S=S^*$, $I=I^*$, $C=C^*$ and $A=A^*$. 
By LaSalle's Invariance Principle \cite{LaSalle1976}, every solution 
to the equations in model \eqref{eq:model:PreP:reduced} 
with initial conditions in $\left\lbrace (S, I, C, A) \in \R_{+0}^4 \, : \, S 
\leq \frac{\Lambda}{\psi+\mu }, N \leq \frac{\Lambda}{\mu} \right\rbrace \backslash  
\left\lbrace S = I = C = A \right\rbrace$ approaches the endemic equilibrium $\tilde{\Sigma}_+$, 
which implies that the endemic equilibrium $\tilde{\Sigma}_+$ of system \eqref{eq:EE:Prep:2} 
is globally asymptotically stable on $\Omega_P \backslash \Omega_{P0}$, 
where $\Omega_{P0} = \{ \left( S, I, C, A \right) \in \Omega_P \, : \, I=C=A=0 \}$. 
We have just proved the following result. 

\begin{theorem}
\label{thm:need:gen:op}
The endemic equilibrium $\tilde{\Sigma}_+$ of model \eqref{eq:model:PreP:2} 
is globally asymptotically stable in $\Omega_P \backslash \Omega_{P0}$ whenever $R_0 > 1$.  
\end{theorem}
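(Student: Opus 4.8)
The plan is to inherit almost the entire argument from the negligible-mortality SICA analysis of Section~\ref{sec:glob:stab:SICA:d=0}, the only genuinely new ingredient being the extra removal term $-\psi S$ in the susceptible equation. First I would note that, with $\theta=0$, the compartment $E$ occurs only in its own equation of~\eqref{eq:model:PreP:2} and does not influence the $(S,I,C,A)$ block; it therefore suffices to prove global asymptotic stability of the endemic point for the reduced system~\eqref{eq:model:PreP:reduced}, after which integrating the linear equation $\dot E=\psi S-\mu E$ along the convergent trajectory forces $E\to\tilde E$. This is exactly why the statement is phrased on $\Omega_P\setminus\Omega_{P0}$.

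For the reduced system I would take the Goh--Volterra function~\eqref{eq:Lyapunov:function} centred at $\tilde\Sigma_+=(\tilde S,\tilde I,\tilde C,\tilde A)$ from~\eqref{eq:EE:Prep:2}, differentiate along trajectories, and substitute the four steady-state identities $\Lambda=\beta_1(\tilde I+\eta_C\tilde C+\eta_A\tilde A)\tilde S+(\mu+\psi)\tilde S$, $\xi_3\tilde I=\beta_1(\tilde I+\eta_C\tilde C+\eta_A\tilde A)\tilde S+\alpha\tilde A+\omega\tilde C$, $\xi_2\tilde C=\phi\tilde I$ and $\xi_1\tilde A=\rho\tilde I$. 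The crucial point is that the new term enters only through $-(\mu+\psi)S$: after eliminating $\Lambda$ it produces, in addition to the usual $\mu\tilde S\,(2-\tfrac{S}{\tilde S}-\tfrac{\tilde S}{S})$, a further contribution $\psi\tilde S\,(2-\tfrac{S}{\tilde S}-\tfrac{\tilde S}{S})$, which merges into the diagonal $(\beta_1\tilde I\tilde S+\mu\tilde S+\psi\tilde S)(2-\tfrac{S}{\tilde S}-\tfrac{\tilde S}{S})$ and is $\le 0$ because $2-x-x^{-1}\le 0$. Hence PrEP strengthens, rather than disturbs, the negative diagonal.

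Every remaining term of $\dot V$ is formally the one already obtained in the $d=0$ SICA computation, so I would reuse that regrouping: the transmission and transfer contributions collapse, via the three balance relations above, into cyclic blocks of the form $c\,(m-\sum_{i=1}^m r_i)$ in which the positive ratios $r_i$ multiply to one (for instance the $C$-block $\omega\tilde C(2-\tfrac{C\tilde I}{\tilde C I}-\tfrac{I\tilde C}{\tilde I C})$ and the analogous $A$-block), each non-positive by the arithmetic--geometric mean inequality. The main obstacle is precisely this bookkeeping: one must check that the weights $\tfrac{\omega}{\xi_2}$ and $\tfrac{\alpha}{\xi_1}$ in~\eqref{eq:Lyapunov:function} are exactly those making the off-diagonal coefficients match across each cycle, so that the products of ratios are unity; the $\psi$-term, being confined to the $S$-diagonal, plays no role in this regrouping and so cannot spoil it.

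Finally, equality $\dot V=0$ in every AM--GM block forces $S=\tilde S$, $I=\tilde I$, $C=\tilde C$, $A=\tilde A$, so the largest invariant set contained in $\{\dot V=0\}$ reduces to the singleton $\{\tilde\Sigma_+\}$. LaSalle's Invariance Principle~\cite{LaSalle1976} then yields global asymptotic stability of $\tilde\Sigma_+$ on $\Omega_P\setminus\Omega_{P0}$ whenever $R_0>1$.
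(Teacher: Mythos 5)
Your proposal is correct and follows essentially the same route as the paper: reduce to the $(S,I,C,A)$ block since $E$ decouples, apply the same Lyapunov function \eqref{eq:Lyapunov:function} centred at the endemic equilibrium, use the modified balance relation $\Lambda=\beta_1(\tilde I+\eta_C\tilde C+\eta_A\tilde A)\tilde S+(\mu+\psi)\tilde S$ to produce the extra non-positive term $\psi\tilde S\,(2-\tfrac{S}{\tilde S}-\tfrac{\tilde S}{S})$, and conclude via the arithmetic--geometric mean inequality and LaSalle's Invariance Principle. The only cosmetic difference is that you make explicit the final step $E\to\tilde E$ from the decoupled linear equation, which the paper leaves implicit.
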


A generalization of Theorem~\ref{thm:need:gen:op} to the case 
$d > 0$ and $\theta > 0$ remains an open question. 


\subsection{Numerical simulations}
\label{sec:numSimu:SICAE}

In this section, we investigate the impact 
of PrEP in the reduction of HIV transmission. 
We assume that the total population is constant, that is, 
$\Lambda = \mu N$ with $\mu = 1/69.54$ and $d = 0$. 
The initial conditions are given by
\begin{equation}
\label{eq:init:SICAE}
S(0) = 10000, \, I(0) = 200, \, C(0) = 0, \,  
A(0) = 0 \quad \text{and} \quad E(0) = 0.
\end{equation}
We start our numerical simulations assuming that 10 per cent of the susceptible individuals take PrEP, 
that is, $\psi = 0.1$ and the default rate takes the value $\theta = 0.001$. Consider $\beta = 0.582$, 
$\eta_C = 0.04$, $\eta_A = 1.35$, and that the parameters $\omega$, $\rho$, $\phi$ and $\alpha$ 
take the values of Table~\ref{table:parameters:HIV:CV}. We compare the case $(\psi, \theta) 
= (0.1, 0.001)$ with $(\psi, \theta) = (0, 0)$ for $t \in [0, t_f]$, $t_f = 25$ years, 
where  $(\psi, \theta) = (0, 0)$ means that no susceptible individual was under PrEP. 
\begin{figure}[!htb]
\centering
\includegraphics[width=1.00\textwidth]{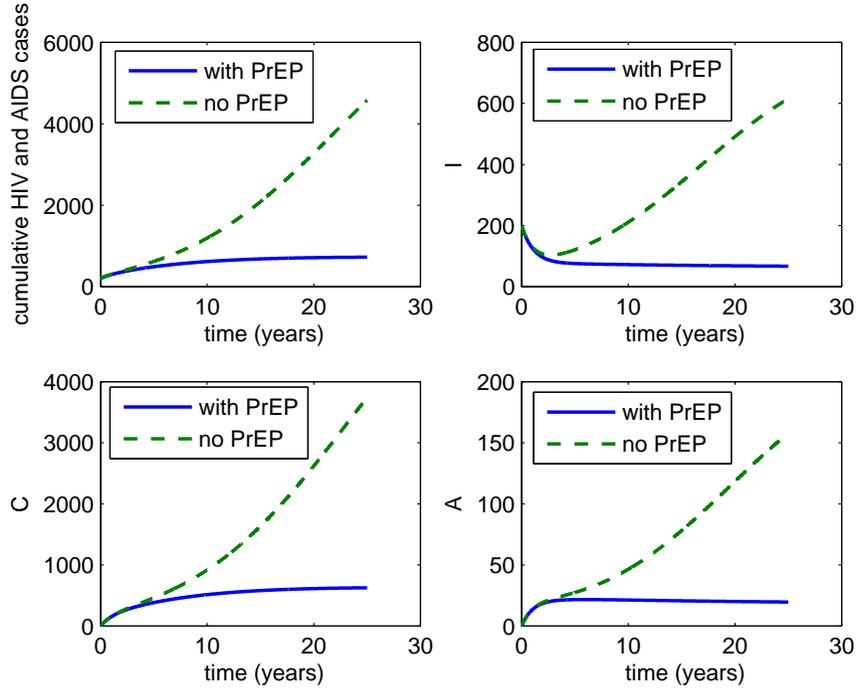}
\caption{Top left: cumulative HIV and AIDS cases. Top right: 
pre-AIDS HIV infected individuals $I$. Bottom left: HIV-infected 
individuals under ART treatment $C$. Bottom right: HIV-infected 
individuals with AIDS symptoms $A$. Expression ``\emph{with PrEP}'' 
refers to the case $(\psi, \theta) = (0.1, 0.001)$ and 
``\emph{no PrEP}'' refers to the case $(\psi, \theta) = (0, 0)$.}
\label{fig:SICAE:PrEP:YesNo}
\end{figure}
From Figure~\ref{fig:SICAE:PrEP:YesNo}, we observe that PrEP reduces 
the number of individuals with HIV infection. In fact, 
for $(\psi, \theta) = (0.1, 0.001)$, we have $S(25) \simeq 1687$, 
$I(25) \simeq 67$, $C(25) \simeq 626$, $A(25) \simeq 20$ and $E(25) = 7800$. 
It is important to note that $I(t)$ is a decreasing function 
for all $t \in [0, 25]$ and that the maximum value for the number 
of HIV-infected individuals with AIDS symptoms is less than 22 
for $t \in [0, 25]$. On the other hand, at the end of the 25 years, 
the number of individuals that are taking PrEP is equal to 7800, 
which is highly expensive (the PrEP drug costs between \$8,000 
and \$14,000 per year for each individual). Therefore,
it is of most importance to establish what is the optimal proportion 
of susceptible individuals that should take PrEP, taking into consideration 
its costs. In Section~\ref{sec:optimal:control}, we formulate 
this problem mathematically, using the theory of optimal control, 
and we study it both analytically and numerically.  


\section{Optimal control problem with a mixed state control constraint}
\label{sec:optimal:control}

Substantial gaps remain in understanding the trade-offs between 
costs and benefits of choosing alternative HIV prevention strategies, 
such as the initiation of PrEP by high risk uninfected individuals 
\cite{Drabo:PreP:CID:2016}. Following WHO, making PrEP drugs available 
for safe, effective prevention outside the clinical trial setting is the current challenge. 
However, it is important to highlight and recall that PrEP is not for everyone: 
only people who are HIV-negative and at very high risk for HIV infection 
should take PrEP \cite{url:aids:PrEP}. Moreover, PrEP is highly expensive 
and it is still not approved in many countries like, for example, 
by the European Medicines Agency (EMEA) \cite{Review:PrEP:Infection:2016}. Therefore, 
the number of individuals that should take PrEP is limited at each instant 
of time for a fixed interval of time $[0, t_f]$. In order to study this health public problem, 
from an optimal point of view, we formulate an optimal control problem 
with a mixed state control constraint. For the usefulness of such problems
in epidemiology see, e.g., \cite{SEIR:Rosario:2014}. We consider the model with PrEP 
\eqref{eq:model:PreP} and formulate an optimal control problem with the aim to determine 
the PrEP strategy $\psi$ that minimizes the number of individuals with pre-AIDS 
HIV-infection $I$ as well as the costs associated with PrEP. We assume that the fraction 
of individuals that takes PrEP, at each instant of time, is a control function, 
that is, $\psi \equiv u(t)$ with $t \in [0, t_f]$, and that the total population 
$N$ is constant: the recruitment rate is proportional to the natural death rate, 
$\Lambda = \mu N$, and there are no AIDS-induced deaths ($d=0$). Precisely, 
we consider the model with control $u(t)$ given by
\begin{equation}
\label{eq:model:PreP:control}
\begin{cases}
\dot{S}(t) = \mu N - \frac{\beta}{N} \left( I(t) + \eta_C \, C(t)  
+ \eta_A  A(t) \right) S(t) - \mu S(t) - S(t) u(t) + \theta E(t),\\[0.2 cm]
\dot{I}(t) = \frac{\beta}{N} \left( I(t) + \eta_C \, C(t)  
+ \eta_A  A(t) \right) S(t) - (\rho + \phi + \mu)I(t) + \alpha A(t)  + \omega C(t), \\[0.2 cm]
\dot{C}(t) = \phi I(t) - (\omega + \mu)C(t),\\[0.2 cm]
\dot{A}(t) =  \rho \, I(t) - (\alpha + \mu) A(t) ,\\[0.2 cm]
\dot{E}(t) = S(t) u(t) - (\mu + \theta) E(t)
\end{cases}
\end{equation}
and formulate an optimal control problem with the aim to determine the PrEP 
strategy $u$ over a fixed interval of time $[0, t_f]$ that minimizes the cost functional
\begin{equation}
\label{eq:cost}
J(u) = \int_0^{t_f} \left[ w_1 I(t) + w_2 u^2(t) \right]  \, dt,
\end{equation}
where the constants $w_1$ and $w_2$ represent the weights associated with 
the number of HIV infected individuals $I$ and on the cost associated with 
the PrEP prevention treatment, respectively. It is assumed that the control 
function $u$ takes values between 0 and 1. When $u(t) = 0$, no susceptible 
individual takes PrEP at time $t$; if $u(t)=1$, then all susceptible individuals 
are taking PrEP at time $t$. Let $\vartheta$ denote the total number of susceptible 
individuals under PrEP for a fixed time interval $[0, t_f]$. This constraint is represented by
\begin{equation}
\label{eq:constraint}
S(t) u(t) \leq \vartheta \, , \quad \vartheta \geq 0 \, , \, \,
\text{for almost all} \, \, t \in [0, t_f] \, ,
\end{equation}
which should be satisfied at almost every instant of time during the whole PrEP program.
Let 
\begin{equation*}
x(t) =(x_1(t), \ldots,  x_5(t))
=\left( S(t), I(t), C(t), A(t), E(t) \right) 
\in {\mathbb{R}}^5.
\end{equation*}
The optimal control problem consists to find the optimal trajectory $\tilde{x}$, 
associated with the control $\tilde{u}$, satisfying the control system 
\eqref{eq:model:PreP:control}, the initial conditions \eqref{eq:init:SICAE},
\begin{equation*}
x(0) = (x_{10}, x_{20}, x_{30}, x_{40}, x_{50}), 
\quad \text{with} \quad x_{10} \geq 0,\,  x_{20} \geq 0, \, x_{30} \geq 0, \,  
x_{40} \geq 0, \,  x_{50} \geq 0,
\end{equation*}
the constraint \eqref{eq:constraint}, and where the control 
$\tilde{u} \in \Omega$ minimizes the objective functional
\eqref{eq:cost} with
\begin{equation}
\label{eq:admiss:control}
\Omega = \biggl\{ u(\cdot) \in L^{\infty}(0, t_f) \,
| \,  0 \leq u (t) \leq 1  \biggr\}.
\end{equation}
The control system can be rewritten in the following way:
\begin{equation*}
\frac{dx(t)}{dt} = f(x(t)) + A x(t)+ B x(t) u(t)
\end{equation*}
with
\begin{equation*}
A = \begin{pmatrix}
- \mu & 0 & 0 & 0 &0 \\
0 & - \rho - \phi - \mu & \omega & \alpha &0 \\
0 & \phi & - \omega - \mu & 0 &0  \\
0 & \rho & 0 & - \alpha - \mu &0   \\
0 & 0 & 0 & 0 & - \mu - \theta \\
\end{pmatrix}
\end{equation*}
and
\begin{equation*}
B = \left(b \, Z  \right),
\end{equation*}
where $b = \left(-1 \, 0 \, 0\, 0\, 1 \right)^T$ 
and $Z = 0$ with $0$ the $5 \times 4$ null matrix and
$f =  \left(f_1 \, f_2 \, 0\, 0\, 0   \right)$ with
\begin{equation*}
f_1 = \mu N - \frac{\beta}{N} \left( I(t) + \eta_C \, C(t)  + \eta_A  A(t) \right) S(t)
\end{equation*}
and
\begin{equation*}
f_2 = \frac{\beta}{N} \left( I(t) + \eta_C \, C(t)  + \eta_A  A(t) \right) S(t).
\end{equation*}
It follows from Theorem~23.11 in \cite{livro:Clarke:2013} that
problem \eqref{eq:model:PreP:control}--\eqref{eq:admiss:control}
has a solution (see also \cite{SEIR:Rosario:2014}).
Let $(\tilde{x}, \tilde{u})$ denote such solution.
To determine it, we apply the Pontryagin Maximum Principle
(see, e.g., Theorem~7.1 in \cite{SIAM:Clarke:Rosario}): there exist
multipliers $\lambda_0 \leq 0$, $\lambda \in AC([0, t_f]; {\mathbb{R}}^5)$,
and $\nu \in L^1([0, t_f]; {\mathbb{R}})$, such that
\begin{itemize}
\item $\min \{ | \lambda(t) | \, : \, t \in [0, t_f] \}
> \lambda_0$ (nontriviality condition);
	
\item $\frac{d \lambda (t)}{dt}
= - \frac{\partial \mathcal{H}}{\partial x}(\tilde{x}(t), 
\tilde{u}(t), \lambda_0, \lambda(t), \nu(t))$ (adjoint system);
	
\item $\lambda(t) B \tilde{x}(t) + \nu(t) \tilde{x}_1(t)
+ \lambda_0 w_2 \tilde{u}^2(t) \in \mathcal{N}_{[0,1]}(\tilde{u}(t))$ a.e. and
$$
\mathcal{H}(\tilde{x}(t), \tilde{u}(t), \lambda_0, \lambda(t), \nu(t))
\leq \mathcal{H}(\tilde{x}(t), v, \lambda_0, \lambda(t), \nu(t))\, ,
\forall v \in [0, 1] \, :\,  \tilde{x}_1(t)v \leq \vartheta
$$
(minimality condition);
	
\item $\nu(t) (\tilde{x}_1(t) \tilde{u}(t) - \vartheta) = 0$
and $\nu(t) \leq 0$ a.e.
	
\item $\lambda(t_f) = (0, \ldots, 0)$ (transversality conditions);
\end{itemize}
where the Hamiltonian $\mathcal{H}$ for problem
\eqref{eq:model:PreP:control}--\eqref{eq:admiss:control} is defined by
\begin{equation*}
\mathcal{H}(x, u, \lambda_0, \lambda, \nu)
= \lambda_0 \left( w_1 x_2 + w_2 u^2 \right)
+ \lambda \left( f(x) + A x + B x u \right) + \nu (S u - \vartheta)
\end{equation*}
and
$\mathcal{N}_{[0,1]}(\tilde{u}(t))$ stands for the normal cone from convex
analysis to $[0, 1]$ at the optimal control $\tilde{u}(t)$
(see, e.g., \cite{livro:Clarke:2013}). The optimal solution
$(\tilde{x}, \tilde{u})$ is normal (see \cite{SEIR:Rosario:2014} for details),
so we can choose $\lambda_0 = 1$. The unique optimal control $\tilde{u}$ is given by
\begin{equation*}
\tilde{u}(t) = \min \left\{1, \max \left\{0, \frac{1}{2}\frac{\left(\tilde{\lambda}_1(t)
- \tilde{\lambda}_5(t) - \nu(t) \right)\tilde{x}_1(t)}{\omega_2} \right\} \right\},
\end{equation*}
where the adjoint functions satisfy
\begin{equation*}
\begin{cases}
\displaystyle \dot{\tilde{\lambda}}_1 
= \tilde{\lambda}_1 \, \left( \frac{\beta}{N}\, \left( \tilde{x}_2 + \eta_C \, \tilde{x}_3
+\eta_A \, \tilde{x}_4 \right)  + \mu + \tilde{u} \right)\\ 
\qquad \quad - \tilde{\lambda}_2 \frac {\beta }{N} 
\left( \tilde{x}_2 + \eta_C \, \tilde{x}_3 + \eta_A \, \tilde{x}_4 \right)
- \tilde{\lambda}_5 \,\tilde{u}-\nu\,\tilde{u} \\[0.2 cm]
\dot{\tilde{\lambda}}_2 = -\omega_1 + \tilde{\lambda}_1 
\frac {\beta}{N}\, \tilde{x}_1- \tilde{\lambda}_2 \,
\left( \frac {\beta}{N}\, \tilde{x}_1 -\rho-\phi-\mu \right) - \tilde{\lambda}_3 \,
\phi- \tilde{\lambda}_4 \,\rho  \\[0.2cm]
\displaystyle \dot{\tilde{\lambda}}_3 
= \tilde{\lambda}_1 \, \frac {\beta}{N} \eta_C \,\tilde{x}_1
- \tilde{\lambda}_2 \, \left( \frac {\beta}{N} \eta_C \,\tilde{x}_1 
+\omega \right) - \tilde{\lambda}_3 \, \left( -\omega-\mu \right)\\[0.2cm]
\displaystyle \dot{\tilde{\lambda}}_4 
=   \tilde{\lambda}_1 \frac{\beta}{N} \eta_A \tilde{x}_1 - \tilde{\lambda}_2 \,
\left( \frac {\beta}{N} \eta_A \,\tilde{x}_1+\alpha \right) - \tilde{\lambda}_4
\, \left( -\alpha-\mu \right) \\[0.2cm]
\displaystyle \dot{\tilde{\lambda}}_5 
=  -\tilde{\lambda}_1 \,\theta- \tilde{\lambda}_5 \, \left( -\theta-\mu \right).
\end{cases}
\end{equation*}
We solve the optimal control problem 
\eqref{eq:model:PreP:control}--\eqref{eq:admiss:control} 
numerically for concrete parameter values and initial conditions. 
The initial conditions are given by \eqref{eq:init:SICAE} 
and the HIV transmission parameters take the values $\beta = 0.582$, 
$\eta_C = 0.04$, and $\eta_A = 1.35$. We assume that the default PrEP rate 
is equal to $\theta = 0.001$ and the parameters $\omega$, $\rho$, $\phi$, 
$\alpha$ take the values given by Table~\ref{table:parameters:HIV:CV}. 
The weight constants take the values $w_1 = w_2 = 1$. We start by solving 
the optimal control problem without the mixed state control constraint 
and compare the extremals with the case where the fraction 
of individuals under PrEP is constant for all $t \in [0, 25]$ 
and equal to $\psi = 0.1$ and $\psi = 0.9$. 
\begin{figure}[!htb]
\centering
\includegraphics[width=0.85\textwidth]{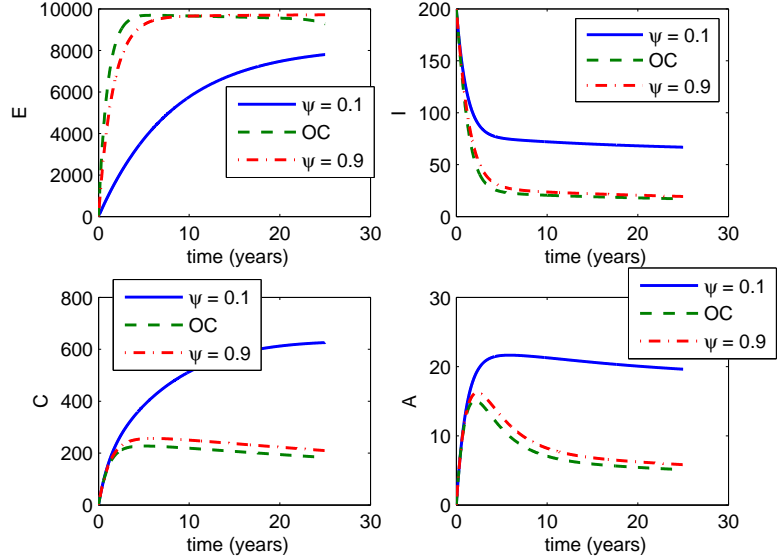}
\caption{Top left: Individuals under PrEP, $E$. 
Top right: pre-AIDS HIV infected individuals, $I$. 
Bottom left: HIV-infected individuals under ART treatment, $C$. 
Bottom right: HIV-infected individuals with AIDS symptoms, $A$.  
The continuous line is the solution of model \eqref{eq:model:PreP} 
for $\psi = 0.1$, the dashed line ``$-\, -$'' is the solution 
of the optimal control problem with no mixed state control constraint 
and ``$\cdot \, -$'' is the solution of model 
\eqref{eq:model:PreP} for $\psi = 0.9$. }
\label{fig:SICAE:control:NoMixed:psi:01:09}
\end{figure}
In Figure~\ref{fig:SICAE:control:NoMixed:psi:01:09}, we observe that the number 
of HIV-infected individuals associated with the optimal control solution 
$\tilde{I}$, $\tilde{C}$, $\tilde{A}$ is lower than the respective number 
associated with the constant values $\psi = 0.1$ or $\psi = 0.9$. On the other hand, 
we observe that the optimal control starts taking the maximum value $1$, 
which means that all susceptible individuals should be under PrEP 
(see Figure~\ref{Control_NoMixedPsi0109}) but this situation does 
not respect the mixed state control constraint (see Figure~\ref{uSnoMixed}) 
and implies higher implementation costs. 
\begin{figure}[!htb]
\centering
\subfloat[\footnotesize{Optimal control.}]{\label{Control_NoMixedPsi0109}
\includegraphics[width=0.45\textwidth]{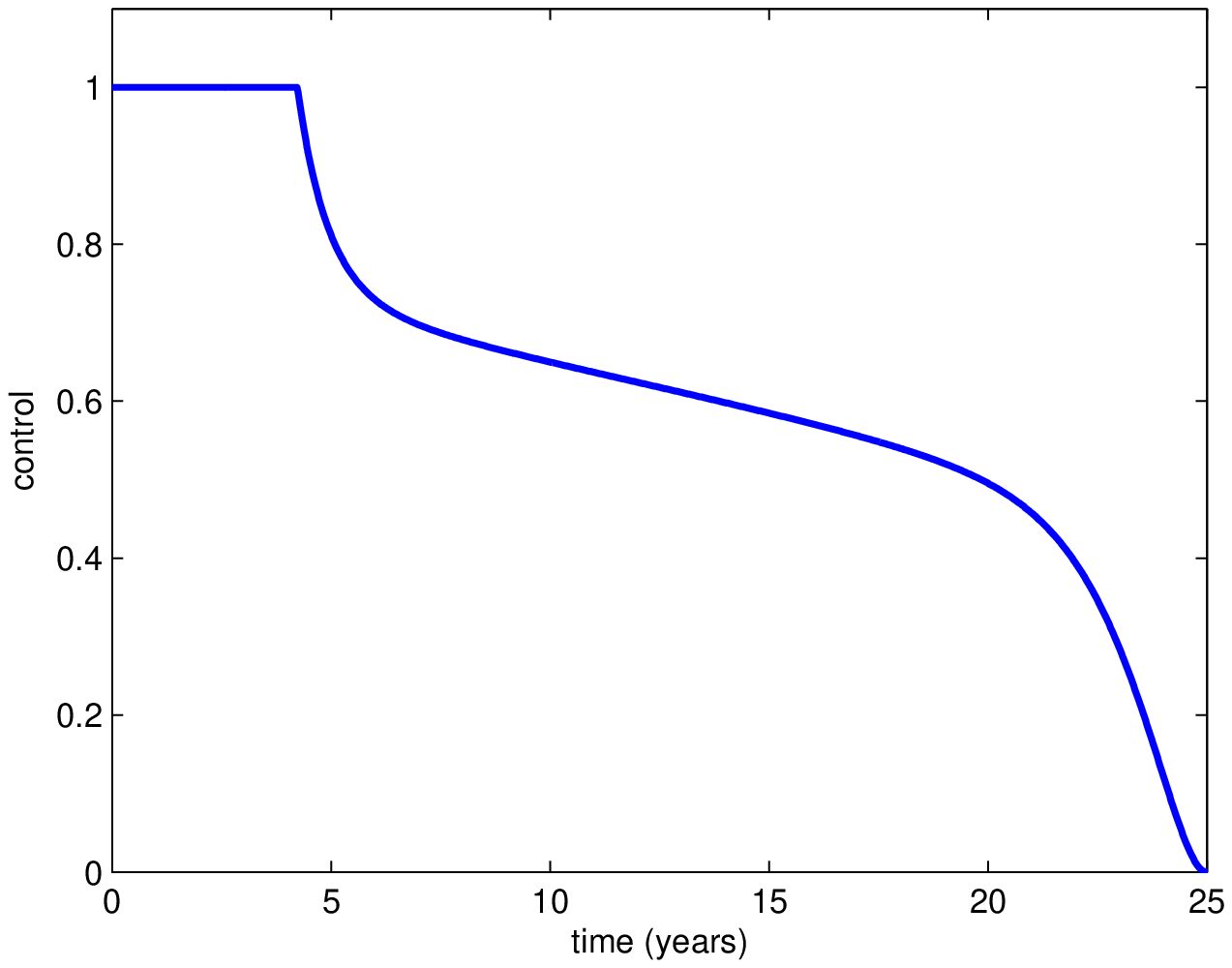}}
\subfloat[\footnotesize{Total individuals under PrEP.} ]{\label{uSnoMixed}
\includegraphics[width=0.45\textwidth]{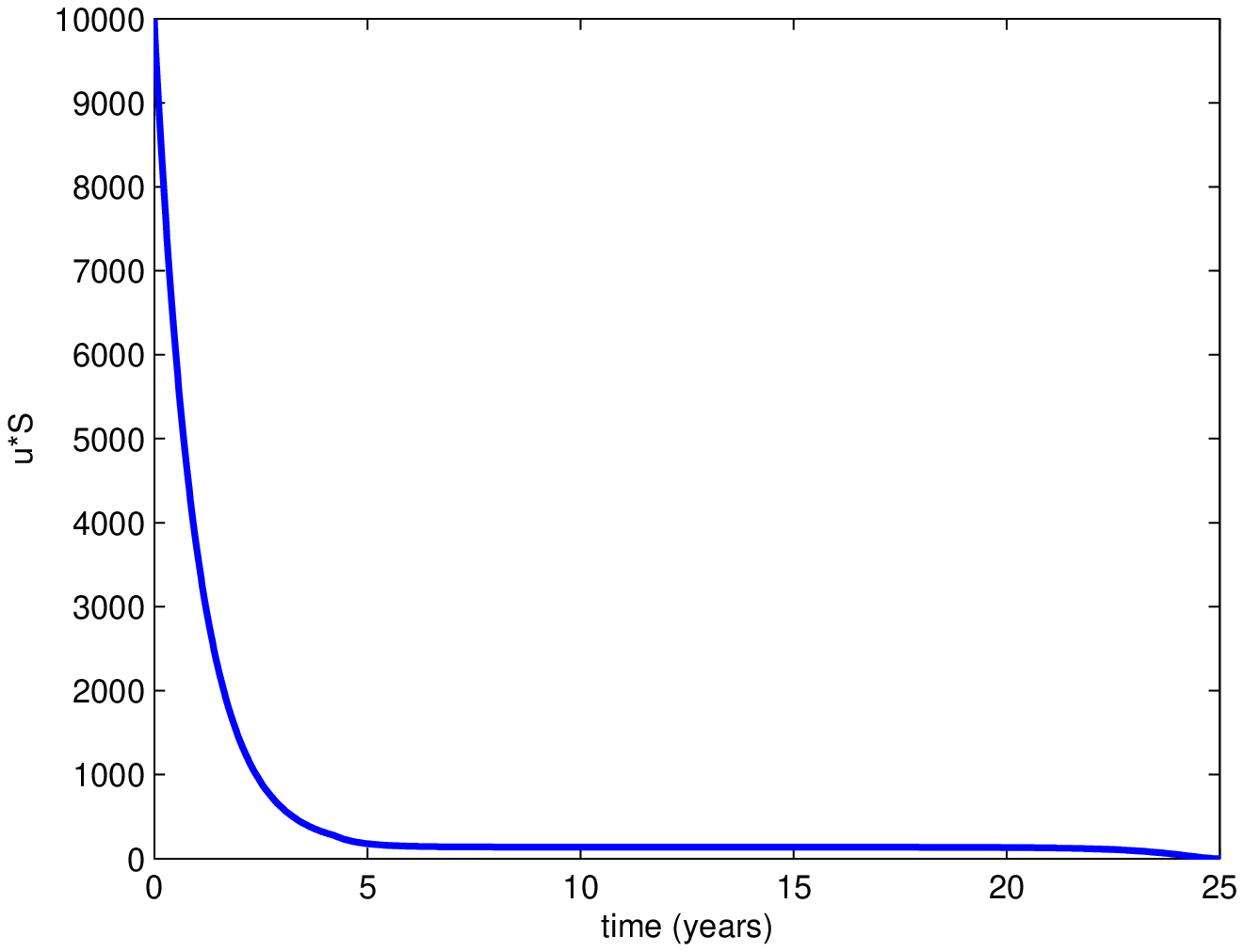}}
\caption{Solutions of the optimal control 
problem with no mixed state control constraint.
(a) Optimal control. (b) Total number of individuals 
that take PrEP at each instant of time.}
\label{control:uS:NoMixed}
\end{figure}
Now, consider the mixed state-control constraint 
\begin{equation}
\label{eq:constraint:num}
S(t) u(t) \leq 2000 \, , \quad \vartheta \geq 0 \, , \, \,
\text{for almost all} \, \, t \in [0, t_f].
\end{equation}
We start by comparing the optimal control solutions $\tilde{I}$, $\tilde{C}$, $\tilde{A}$ 
and $\tilde{E}$ associated with the optimal control $\tilde{u}$ with the solution 
of model \eqref{eq:model:PreP} with fixed values for the fraction of susceptible 
individuals under PrEP, $\psi = 0.1, 0.9$. 
\begin{figure}[!htb]
\centering
\includegraphics[width=0.85\textwidth]{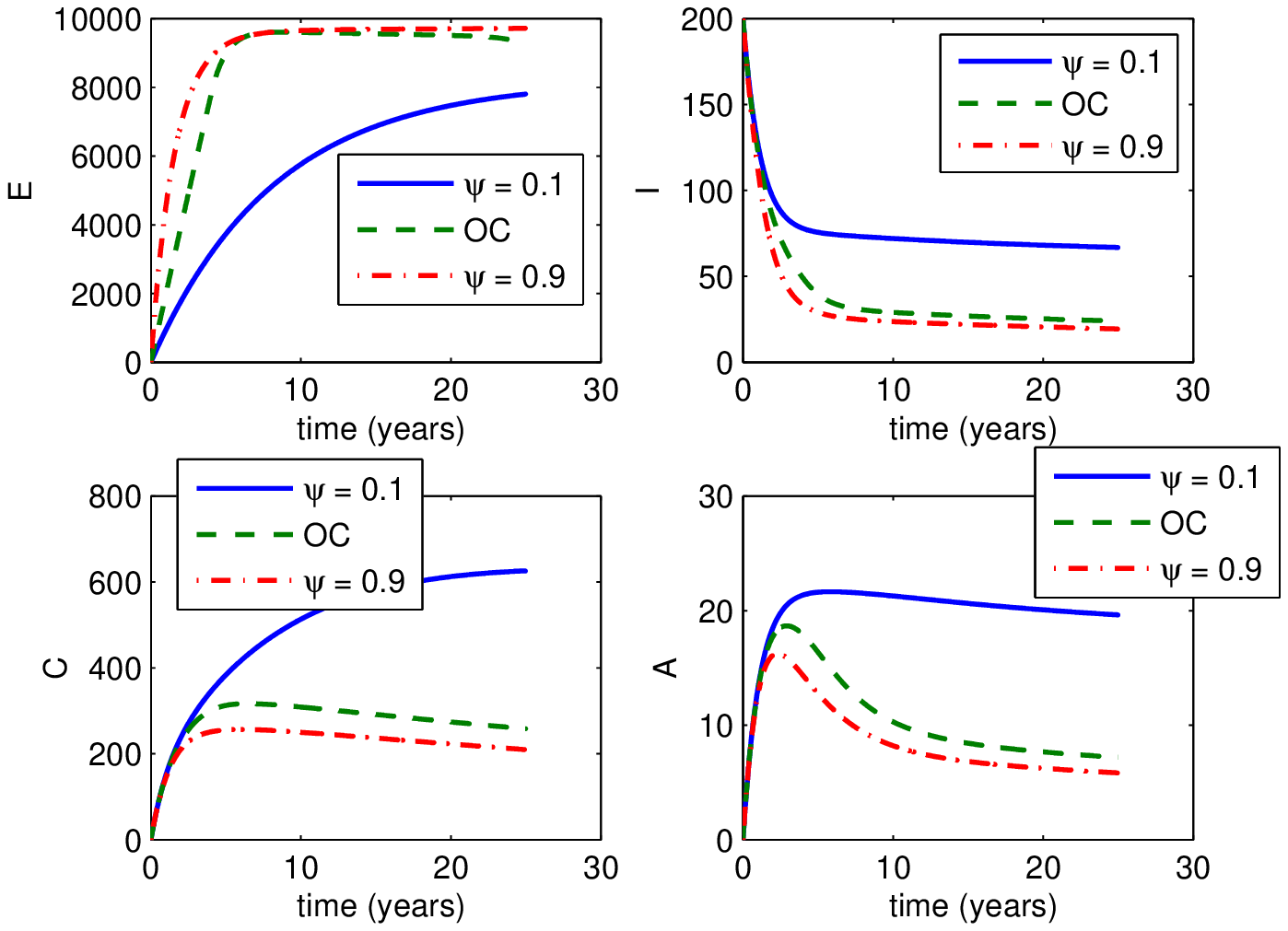}
\caption{Top left: Individuals under PrEP, $E$. 
Top right: pre-AIDS HIV infected individuals, $I$. 
Bottom left: HIV-infected individuals under ART treatment, $C$. 
Bottom right: HIV-infected individuals with AIDS symptoms, $A$.  
The continuous line is the solution of model \eqref{eq:model:PreP} 
for $\psi = 0.1$, the dashed line ``$-\, -$'' is the solution 
of the optimal control problem with the mixed state control constraint 
\eqref{eq:constraint:num} and ``$\cdot \, -$'' 
is the solution of model \eqref{eq:model:PreP} for $\psi = 0.9$.}
\label{fig:SICAE:control:psi:01:09}
\end{figure}
In Figure~\ref{fig:SICAE:control:psi:01:09}, we observe that optimal 
control solutions are not associated with the lowest values of the number 
of individuals with HIV infection. In fact, if we consider 
the case where 90 per cent of the susceptible population 
is under PrEP, the number of individuals with HIV infection 
is lower than the corresponding values associated with the optimal control solution. 
This is related to the mixed constraint \eqref{eq:constraint:num}. 
In Figure~\ref{uS}, we observe that the mixed constraint \eqref{eq:constraint:num} 
is satisfied for all $t \in [0, 25]$. The optimal control $\tilde{u}$ starts 
with the value $0.2$ and is an increasing function for $t \in [0, 4.05]$ years; 
for $t \in [4.05, 6.88]$, the optimal control $\tilde{u}$ takes the maximum value one,
which means that all susceptible individuals should be taking PrEP. 
For the rest of the simulation period, the optimal control $\tilde{u}$ 
is a decreasing function, taking values less than one. 
The mean value of the optimal control is approximately $0.61$
(see Figure~\ref{control}). 
\begin{figure}[!htb]
\centering
\subfloat[\footnotesize{Optimal control $\tilde{u}$}]{\label{control}
\includegraphics[width=0.33\textwidth]{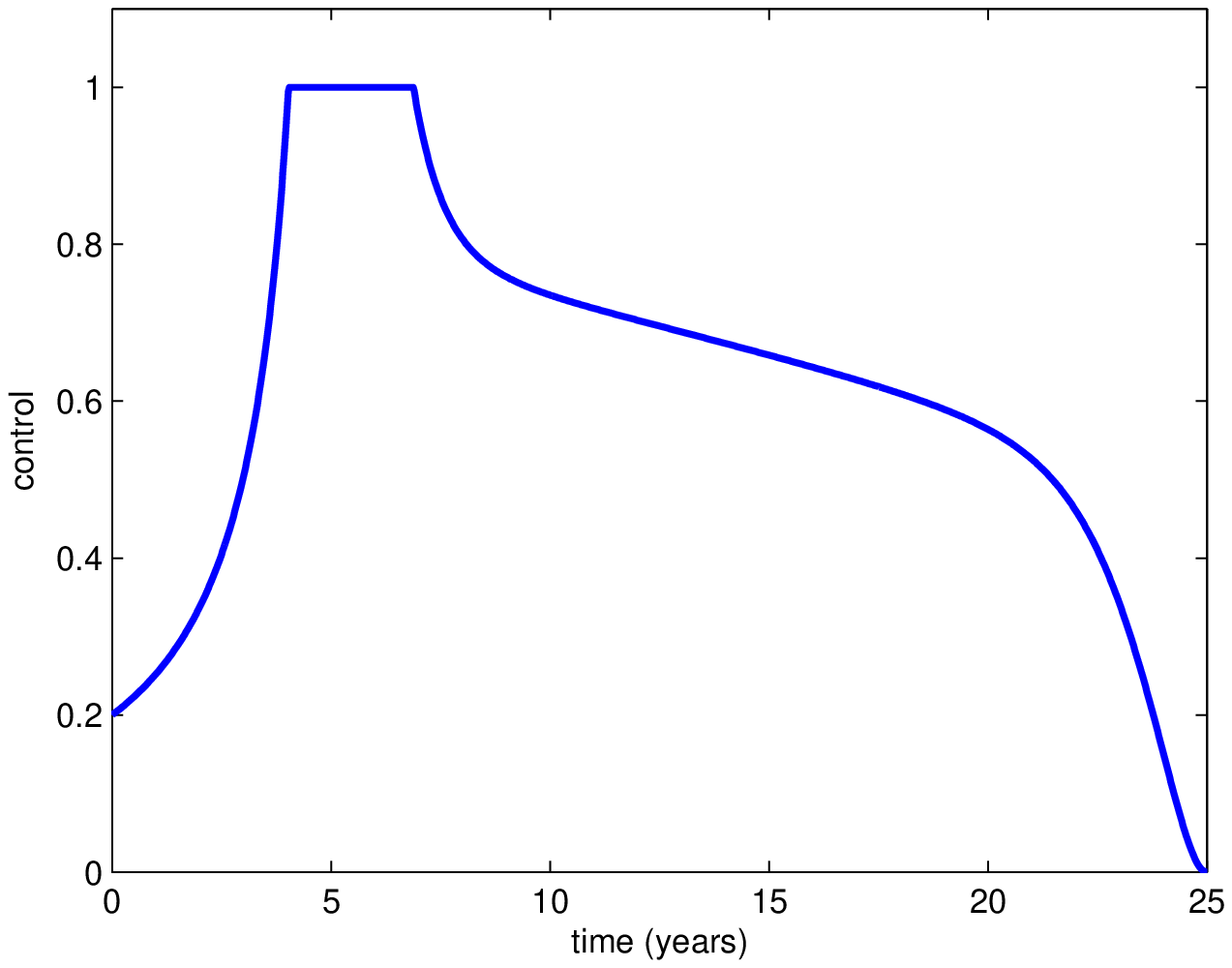}}
\subfloat[\footnotesize{Total individuals under PrEP.}]{\label{uS}
\includegraphics[width=0.33\textwidth]{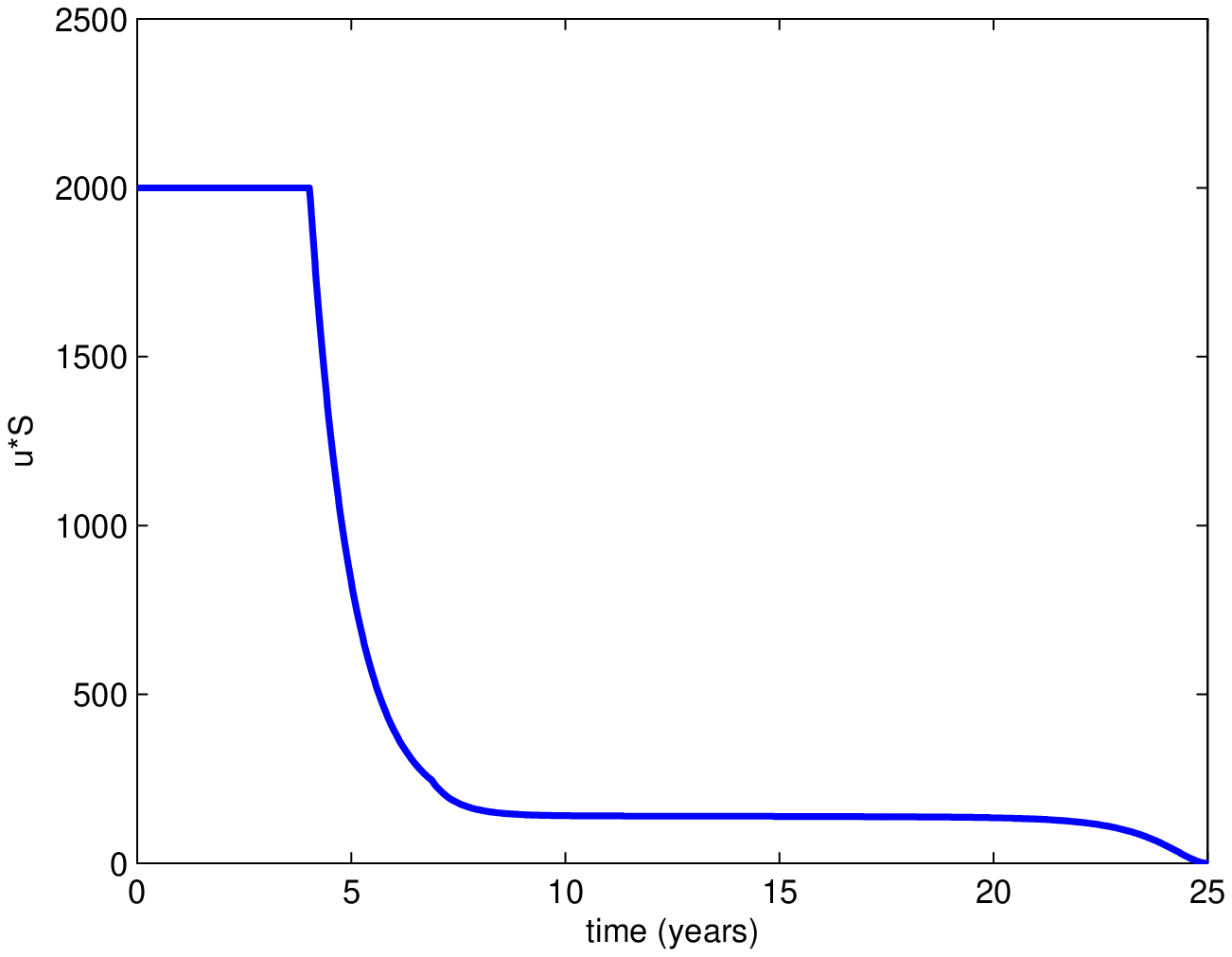}}
\subfloat[\footnotesize{Total individuals under PrEP ($\psi = 0.61$).}]{\label{psiS}
\includegraphics[width=0.33\textwidth]{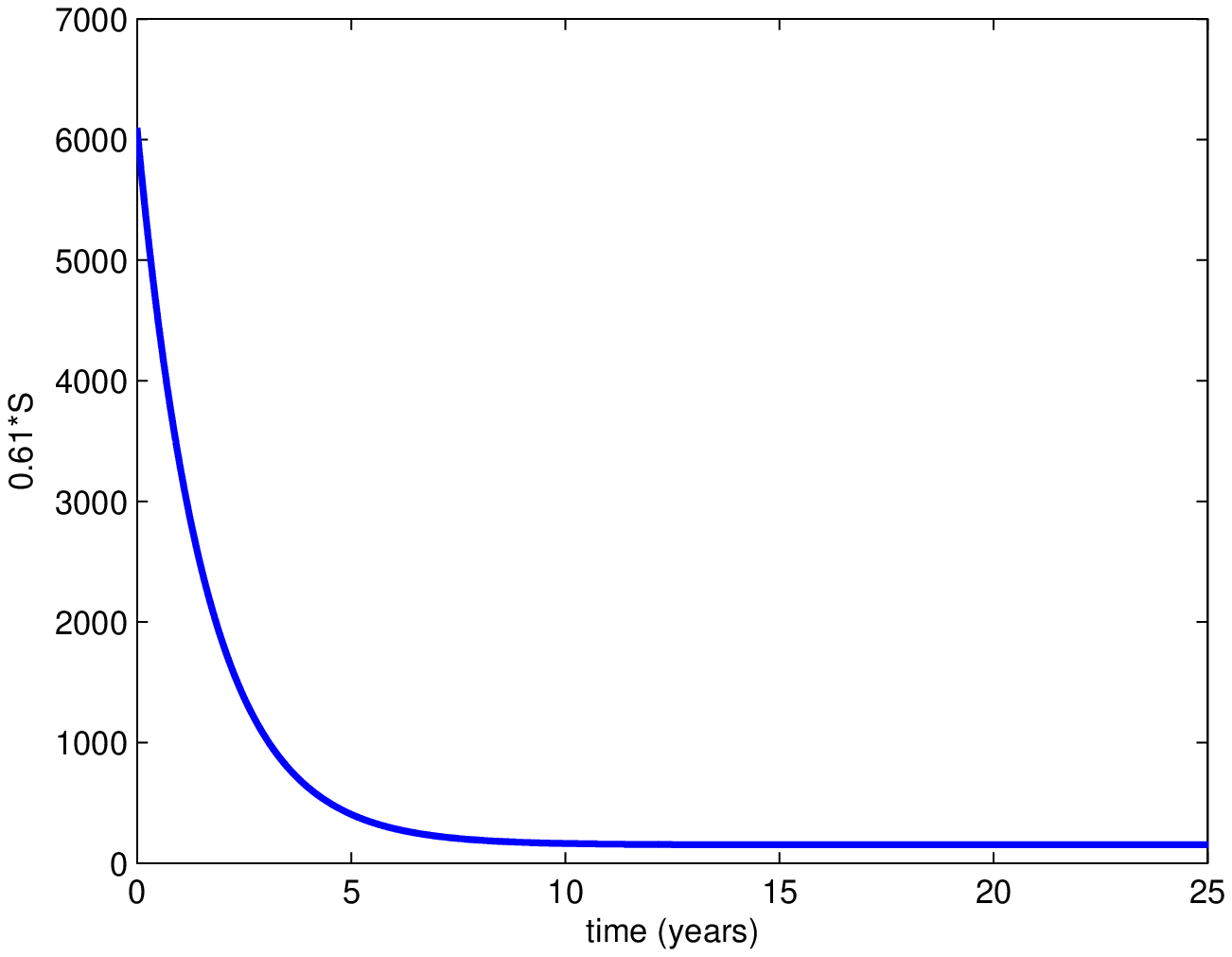}}
\caption{(a) Optimal control $\tilde{u}$ considering the mixed state control 
constraint \eqref{eq:constraint:num}. (b) Total number of individuals under PrEP 
at each instant of time for $t \in [0, 25]$ associated with the optimal control $\tilde{u}$. 
(c) Total number of individuals under PrEP at each instant of time 
for $t \in [0, 25]$ associated with $\psi = 0.61$.}
\label{fig:control:uS:psiS}
\end{figure}
In Figure~\ref{fig:SICAE:control:psi:06}, we simulate the case where 
the fraction of susceptible individuals under PrEP is constantly equal 
to the mean value of the optimal control $\tilde{u}$, that is, 
$\psi = 0.61$, and compare the solutions of model \eqref{eq:model:PreP} 
with the solutions of the optimal control problem with mixed state control 
constraint. We observe that the number of HIV infected individuals 
is lower for $\psi = 0.61$ than the ones associated with the optimal 
control solutions. However, the total number of individuals under PrEP 
at each instant of time is much bigger for $\psi = 0.61$ 
(see Figure~\ref{psiS}). In fact, we have $\int_0^{25} \tilde{u}(t) \tilde{S}(t) dt 
= 12553$ and $\int_0^{25} 0.61 \, S(t) dt = 13201$, 
where $S(t)$ denotes the solution of model \eqref{eq:model:PreP} 
for $\psi = 0.61$. We conclude that the cost associated with 
the PrEP strategy $\psi = 0.61$ is bigger than the one associated 
with the optimal control strategy. On the other hand, for 
the optimal control problem with no mixed state control constraint, 
we have $\int_0^{25} \tilde{u}(t) \tilde{S}(t) dt = 12836$, 
which means that this solution has a bigger cost 
than the one that satisfies the mixed constraint. 
\begin{figure}[!htb]
\centering
\includegraphics[width=0.85\textwidth]{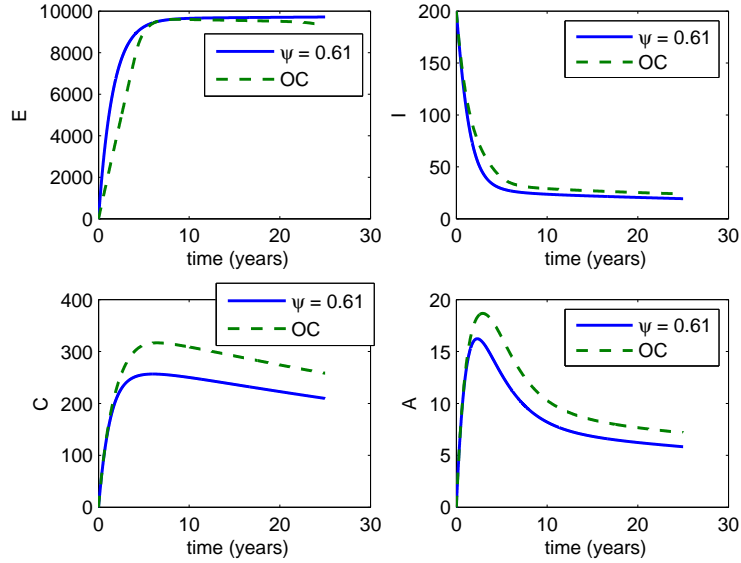}
\caption{Extremals of the optimal control problem 
\eqref{eq:model:PreP:control}--\eqref{eq:admiss:control}
with $\theta = 0.001$.}
\label{fig:SICAE:control:psi:06}
\end{figure}

Let us now suppose that we increase the value of $w_2$, 
that is, the weight associated with the cost of submitting 
susceptible individuals to PrEP. An increase of $w_2$ 
is associated with a decrease of the number of individuals 
that will be under PrEP. For example, for $w_2 = 10000$, 
the maximum value of the optimal control $\tilde{u}$ is approximately $0.17$ 
and $\int_0^{25} \tilde{u}(t) \tilde{S}(t) dt = 6652.5$, that is, 
the cost associated with the PrEP strategy decreases significantly. 
However, the number of HIV-infected individuals increases, namely 
$\tilde{I}(25) \simeq 110$, $\tilde{C}(25) \simeq 813$ and $\tilde{A}(25) \simeq 29$.   


\section{Conclusion and future work}
\label{sec:conclusion}

We have proved the global stability of the endemic equilibrium point 
of the SICA model for HIV/AIDS transmission proposed in \cite{SilvaTorres:TBHIV:2015}, 
in the case where the AIDS-induced death rate is negligible. Two different values 
for the relative infectiousness of HIV-infected individuals under ART treatment 
were considered, based on the studies reported in \cite{Cohen:NEJM:2011,DelRomero:2016} 
($\eta_C = 0.04$ and $\eta_C = 0.015$) and two values for the relative infectiousness 
of HIV-infected individuals with AIDS symptoms ($\eta_A = 1.3$ and $\eta_A = 1.35$), 
based in \cite{art:viral:load}. We have shown that taking the parameter values given 
in Table~\ref{table:parameters:HIV:CV}, the SICA model describes well the cumulative 
cases of infection by HIV and AIDS in Cape Verde for the period from 1987 to 2014
\cite{report:HIV:AIDS:capevert2015,WorldBank:TotalPop:url}. Furthermore, 
we generalized the SICA model to a HIV/AIDS-PrEP model (SICAE model) 
including PrEP as an HIV prevention strategy. We proved the existence 
and uniqueness of disease-free and endemic equilibrium points, 
depending on the value of the basic reproduction number $R_0$. Through Lyapunov functions 
and LaSalle's Invariance Principle, we proved the global stability of the disease-free 
equilibrium for $R_0 < 1$ and the global stability of the endemic equilibrium point 
when $R_0 > 1$ for negligible AIDS-induced death rate and strict adherence to PrEP.
The number of HIV-infected individuals for the SICA and SICAE models was compared.
We concluded that PrEP reduces the number of new HIV infections. However, 
only people who are HIV-negative and at very high risk for HIV infection should take PrEP. 
Therefore, the number of individuals that should take PrEP must be limited 
at each instant of time, for a fixed interval of time.  In order to study 
this health public problem, we formulated an optimal control problem 
with a mixed state control constraint, where the objective is to determine 
the optimal PrEP strategy that satisfies the mixed constraint 
(the total number of individuals under PrEP at each instant of time is limited) 
and minimizes the number of pre-AIDS HIV-infected individuals as well as the cost 
associated with the implementation of PrEP. The optimal control problems 
with and without the mixed state control constraint 
were solved and compared with the optimal solutions 
in the case where the fraction of individuals under PrEP is constant. 
Optimal control theory gave us PrEP strategies that 
minimize the number of HIV-infected individuals, the cost associated with PrEP 
and satisfy the limitations on the number of total individuals 
that should be under PrEP at each instant of time. 

It remains open the questions of how to prove the global stability of the endemic 
equilibrium point of the SICA model for positive AIDS-induced death rate 
and of the SICAE model for positive AIDS-induced death and PrEP default rates. 
As for the optimal control problem with mixed state control constraint, 
we believe it will be also interesting to consider a $L^1$ cost functional 
and a variable total population size. These and other questions are under
investigation and will be addressed elsewhere.
 

\section*{Acknowledgments}

This research was partially supported by 
Portuguese Foundation for Science and Technology (FCT)
through the R\&D unit CIDMA, reference UID/MAT/04106/2013,
and by project PTDC/EEI-AUT/2933/2014 (TOCCATA),
funded by FEDER funds through COMPETE 2020 -- 
Programa Operacional Competitividade e
Internacionaliza\c{c}\~ao (POCI) 
and by national funds through FCT.
Silva is also supported by the FCT post-doc 
fellowship SFRH/BPD/72061/2010.
The authors are grateful to two anonymous referees
for several pertinent questions and suggestions.



\medskip
Received September 2016; revised February and March 2017.
\medskip



\begin{thebibliography}{10}
	
\bibitem{Abbas:PrEP:2007} [10.1371/journal.pone.0000875]
\newblock U. L. Abbas, R. M. Anderson and J. W. Mellors,
\newblock Potential impact of antiretroviral chemoprophylaxis 
on HIV-1 transmission in resource–limited settings,
\newblock \emph{PLoS ONE} \textbf{2} (2007), no.~9, e875, 1--11.	
	
\bibitem{FAgusto:bovineTB:2011} (MR2838274) [10.1002/mma.1486]
\newblock F. B. Agusto, S. Lenhart, A. B. Gumel and A. Odoi,
\newblock Mathematical analysis of a model for the 
transmission dynamics of bovine tuberculosis,
\newblock \emph{Math. Meth. Appl. Sci.} \textbf{34} (2011), no.~15, 1873--1887.

\bibitem{Alistar:PrEP:2014} [10.1186/1741-7015-12-46]
\newblock S. S. Alistar, P. M. Grant and E. Bendavid,
\newblock Comparative effectiveness and cost-effectiveness of antiretroviral 
therapy and pre-exposure prophylaxis for HIV prevention in South Africa,
\newblock \emph{BMC Med.} \textbf{12} (2014), 1--11. 

\bibitem{Eric:HIV:2012} [10.1101/cshperspect.a007161]
\newblock E. J. Arts and D. J. Hazuda,
\newblock HIV-1 Antiretroviral Drug Therapy,
\newblock \emph{Cold Spring Harb. Perspect. Med.} \textbf{2} (2012), no.~4, 1--23.  

\bibitem{SEIR:Rosario:2014} (MR3181992) [10.3934/mbe.2014.11.761]
\newblock M. H. A. Biswas, L. T. Paiva and M. R. de~Pinho,
\newblock A {SEIR} model for control of infectious diseases with constraints,
\newblock \emph{Math. Biosci. Eng.} \textbf{11} (2014), no.~4, 761--784.

\bibitem{Celum:JInfecDis:HIV:Prep} [10.1093/infdis/jit154] 
\newblock C.~Celum, T. B.~Hallett and J. M.~Baeten, 
\newblock HIV-1 prevention with ART and PrEP: mathematical modeling 
insights into Resistance, effectiveness, and public health impact,
\newblock \emph{J. Infect. Dis.} \textbf{208} (2013), no.~2, 189--191.

\bibitem{livro:Clarke:2013} (MR3026831) [10.1007/978-1-4471-4820-3]
\newblock F. Clarke, 
\newblock \emph{Functional analysis, calculus of variations and optimal control}, 
Vol.~264 of Graduate Texts in Mathematics,
\newblock Springer, London, 2013.

\bibitem{SIAM:Clarke:Rosario} (MR2683896) [10.1137/090757642]
\newblock F. Clarke and M. R. de~Pinho,
\newblock Optimal control problems with mixed constraints,
\newblock \emph{SIAM J. Control Optim.} \textbf{48} (2010), no.~7, 4500--4524.

\bibitem{Cohen:NEJM:2011} [10.1056/NEJMoa1105243]
\newblock M. S. Cohen, Y. Q. Chen, M. McCauley et al., 
\newblock Prevention of HIV-1 infection with early antiretroviral therapy,
\newblock \emph{New England Journal of Medicine} \textbf{365} (2011), no.~6, 493--505.

\bibitem{AIDS:chronic:Lancet:2013} [10.1016/S0140-6736(13)61809-7]
\newblock S.~G.~Deeks, S.~R.~Lewin and D.~V.~Havlir,
\newblock The end of AIDS: HIV infection as a chronic disease,
\newblock \emph{The Lancet} \textbf{382} (2013), no.~9903, 1525--1533.

\bibitem{DelRomero:2016} [10.1097/MD.0000000000004398]
\newblock J. Del Romero, M. B. Baza, I. R\'{i}o, A. Jer\'{o}nimo, M. Vera, V. Hernando, 
C. Rodr\'{i}guez and J. Castilla,
\newblock Natural conception in HIV-serodiscordant couples with the infected partner 
in suppressive antiretroviral therapy: A prospective cohort study,
\newblock \emph{Medicine (Baltimore)} \textbf{95} (2016), no.~30, e4398. 

\bibitem{MyID:314} (MR3388961) [10.1080/10556788.2014.994704]
\newblock R. Denysiuk, C. J. Silva and D. F. M. Torres,
\newblock Multiobjective approach to optimal control for a tuberculosis model, 
\newblock \emph{Optim. Methods Softw.} \textbf{30} (2015), no.~5, 893--910. 
\newblock \arXiv{1412.0528}

\bibitem{Drabo:PreP:CID:2016} [10.1093/cid/ciw578]
\newblock E. F. Drabo, J. W. Hay, R. Vardavas, Z. R. Wagner and N. Sood,
\newblock A cost-effectiveness analysis of pre-exposure prophylaxis 
for the prevention of HIV among Los Angeles County: Men who have sex with men,
\newblock \emph{Clin. Infect. Dis.} (2016), pii: ciw578.

\bibitem{Gay:PrEP:2008}
\newblock C. L. Gay and M. S. Cohen,
\newblock Antiretrovirals to prevent HIV infection: Pre- and postexposure prophylaxis,
\newblock \emph{Curr. Infect. Dis. Rep.} \textbf{10} (2008), no.~4, 323--331.

\bibitem{Hethcote:2000} (MR1814049) [10.1137/S0036144500371907]
\newblock H. W. Hethcote,
\newblock The mathematics of infectious diseases,
\newblock \emph{SIAM Rev.} \textbf{42} (2000), 599--653.

\bibitem{MyID:335} (MR3578107) [10.1080/15502287.2016.1231236]
\newblock D. Hincapi\'{e}-Palacio, J. Ospina and D. F. M. Torres,
\newblock Approximated analytical solution to an Ebola optimal control problem,
\newblock \emph{Int. J. Comput. Methods Eng. Sci. Mech.} \textbf{17} (2016), no.~5-6, 382--390. 
\newblock \arXiv{1512.02843}

\bibitem{Kim:ModelPrEP:PlosOne:2014} [10.1371/journal.pone.0090080]
\newblock S. B. Kim, M. Yoon, N. S. Ku, M. H. Kim, J. E. Song, J. Y. Ahn, et al,  
\newblock \emph{Mathematical modeling of HIV prevention measures 
including pre-exposure prophylaxis on HIV incidence in South Korea},
\newblock \emph{PLoS ONE} \textbf{9} (2014), no.~3, e90080, 1--9.

\bibitem{Lakshmikantham:1989} (MR0984861)
\newblock V. Lakshmikantham, S. Leela and A. A. Martynyuk,
\newblock \emph{Stability Analysis of Nonlinear Systems},
\newblock Marcel Dekker, Inc., New York and Basel, 1989.

\bibitem{LaSalle1976} (MR0481301)
\newblock J. P. LaSalle,
\newblock \emph{The Stability of Dynamical Systems},
\newblock SIAM, Philadelphia, PA, 1976. 

\bibitem{MyID:356} (MR3602689) [10.1016/j.cam.2016.11.002]
\newblock A. P. Lemos-Pai\~{a}o, C. J. Silva and D. F. M. Torres, 
\newblock An epidemic model for cholera with optimal control treatment, 
\newblock \emph{J. Comput. Appl. Math.} \textbf{318} (2017), 168--180. 
\newblock \arXiv{1611.02195}

\bibitem{Liu:Zhang:MCM:2011} (MR2801936) [10.1016/j.mcm.2011.03.033]
\newblock J. Liu and T. Zhang, 
\newblock Global stability for a tuberculosis model, 
\newblock \emph{Math. Comput. Modelling} {\bf 54} (2011), no.~1-2, 836--845. 

\bibitem{Loutfy:PlosONE:2013} [10.1371/journal.pone.0055747]
\newblock M. R. Loutfy, W. Wu, M. Letchumanan, et al.,
\newblock Systematic review of HIV transmission between heterosexual 
serodiscordant couples where the HIV-positive partner is fully 
suppressed on antiretroviral therapy,
\newblock \emph{PLoS ONE} \textbf{8} (2013), no.~2, e55747, 1--12.

\bibitem{Monteiro:PrEP:CV:2015} [10.1007/s00038-015-0676-9]
\newblock J. F. G. Monteiro, S. Galea, T. Flanigan, 
M. L. Monteiro, S. R. Friedman and B. D. L. Marshall,
\newblock Evaluating HIV prevention strategies for populations 
in key affected groups: The example of Cabo Verde,
\newblock \emph{Int. J. Public Health} \textbf{60} (2015), no.~4, 457--466. 

\bibitem{MyID:340} (MR3544685) [10.1007/s11786-016-0268-y]
\newblock A. Rachah and D. F. M. Torres,
\newblock Dynamics and optimal control of Ebola transmission,
\newblock \emph{Math. Comput. Sci.} \textbf{10} (2016), no.~3, 331--342.
\newblock \arXiv{1603.03265}

\bibitem{report:HIV:AIDS:capevert2015}
\newblock Rep\'{u}blica de Cabo Verde,
\newblock \emph{Rapport de Progr\`{e}s sur la riposte au SIDA au Cabo Verde -- 2015},
\newblock Comit\'{e} de Coordena\c{c}\~{a}o do Combate a Sida, 2015.

\bibitem{MyID:355} [10.1002/mma.4207]
\newblock D. Rocha, C. J. Silva and D. F. M. Torres,
\newblock Stability and optimal control of a delayed HIV model,
\newblock \emph{Math. Methods Appl. Sci.}, in press.
\newblock \arXiv{1609.07654}

\bibitem{MyID:283} (MR3148910) [10.1016/j.mbs.2013.10.006]
\newblock H. S. Rodrigues, M. T. T. Monteiro and D. F. M. Torres,  
\newblock Vaccination models and optimal control strategies to dengue, 
\newblock \emph{Math. Biosci.} \textbf{247} (2014), 1--12. 
\newblock \arXiv{1310.4387}

\bibitem{MyID:306} (MR3557143) [10.1002/mma.3319]
\newblock H. S. Rodrigues, M. T. T. Monteiro and D. F. M. Torres,  
\newblock Seasonality effects on dengue: basic reproduction number, sensitivity analysis and optimal control, 
\newblock \emph{Math. Methods Appl. Sci.} \textbf{39} (2016), no.~16, 4671--4679. 
\newblock \arXiv{1409.3928}

\bibitem{MyID:353} (MR3562914) [10.3934/mbe.2017021]
\newblock C. J. Silva, H. Maurer and D. F. M. Torres,
\newblock Optimal control of a tuberculosis model with state and control delays,
\newblock \emph{Math. Biosci. Eng.} \textbf{14} (2017), no.~1, 321--337.
\newblock \arXiv{1606.08721}

\bibitem{SilvaTorres:TBHIV:2015} (MR3392642) [10.3934/dcds.2015.35.4639]
\newblock C. J. Silva and D. F. M. Torres, 
\newblock A TB-HIV/AIDS coinfection model and optimal control treatment, 
\newblock \emph{Discrete Contin. Dyn. Syst.} {\bf 35} (2015), no.~9, 4639--4663. 
\newblock \arXiv{1501.03322}

\bibitem{MyID:359} [10.1016/j.ecocom.2016.12.001]
\newblock C. J. Silva and D. F. M. Torres,
\newblock A SICA compartmental model in epidemiology with application to HIV/AIDS in Cape Verde,
\newblock \emph{Ecological Complexity} {\bf 30} (2017), 70--75.
\newblock \arXiv{1612.00732}

\bibitem{Review:PrEP:Infection:2016} [10.1007/s15010-015-0850-2]
\newblock C. D. Spinner, C. Boesecke, A. Zink, H. Jessen, 
H‑J. Stellbrink, J. K. Rockstroh and S. Esser,
\newblock HIV pre‑exposure prophylaxis (PrEP): A review of current
knowledge of oral systemic HIV PrEP in humans,
\newblock \emph{Infection} \textbf{44} (2016), 151--158. 
	
\bibitem{UNAIDS:Global:AIDS:2016}
\newblock UNAIDS, 
\newblock \emph{Global AIDS Update 2016}, 
\newblock Joint United Nations Programme on HIV/AIDS, Geneva, 2016. 
\newblock \url{http://www.unaids.org/en/resources/documents/2016/Global-AIDS-update-2016}
	
\bibitem{van:den:Driessche:2002} (MR1950747) [10.1016/S0025-5564(02)00108-6]
\newblock P. van den Driessche and J. Watmough, 
\newblock Reproduction numbers and sub-threshold endemic equilibria 
for compartmental models of disease transmission, 
\newblock \emph{Math. Biosci.} {\bf 180} (2002), 29--48. 

\bibitem{Weiss:1993:Science} [10.1126/science.8493571]
\newblock R. A. Weiss,
\newblock How does HIV cause AIDS?,
\newblock \emph{Science} \textbf{260} (1993), no.~5112, 1273--1279. 

\bibitem{WHO:PrEP}
\newblock WHO,
\newblock \emph{Policy brief on oral pre-exposure prophylaxis of HIV infection (PrEP)}, 
\newblock Geneva, 2015. 
\newblock \url{http://www.who.int/hiv/pub/prep/policy-brief-prep-2015/en/}

\bibitem{art:viral:load} [10.1016/S0140-6736(08)61115-0]
\newblock D. P. Wilson, M. G. Law, A. E. Grulich, D. A. Cooper and J. M. Kaldor,
\newblock Relation between HIV viral load and infectiousness: A model-based analysis,
\newblock \emph{The Lancet} \textbf{372} (2008), no.~9635, 314--320.

\bibitem{url:worlbank:capevert}
\newblock World Bank Data, 
\newblock Cabo Verde, World Development Indicators, 
\newblock \url{http://data.worldbank.org/country/cape-verde}

\bibitem{ZwahlenEggerUNAIDS}
\newblock M. Zwahlen and M. Egger, 
\newblock \emph{Progression and mortality of untreated HIV-positive 
individuals living in resource-limited settings: Update of 
literature review and evidence synthesis}, 
\newblock Report on UNAIDS obligation no. HQ/05/422204, 2006. 

\bibitem{url:aids:PrEP}
\newblock 
\url{https://www.aids.gov/hiv-aids-basics/prevention/reduce-your-risk/pre-exposure-prophylaxis}

\bibitem{url:WHO:SouthAfrica:PrEP}
\newblock 
\url{http://www.who.int/hiv/mediacentre/news/southafrican-strategy-sex-workers/en}

\bibitem{WorldBank:TotalPop:url}
\newblock 
\url{http://data.worldbank.org/indicator/SP.POP.TOTL?locations=CV}

\end{thebibliography}
\end{document}